\newcommand{\Z}{{\mathbb Z}}
\newcommand{\R}{{\mathbb R}}
\newcommand{\Q}{{\mathbb Q}}
\newtheorem{thm}{Theorem}[section]
\newtheorem{prop}[thm]{Proposition}
\newtheorem{rem}[thm]{Remark}
\newtheorem{lem}[thm]{Lemma}
\begin{document}
\title{On the Hausdorff dimension of the spectrum of  Thue-Morse Hamiltonian }

\author{Qinghui LIU}
\address[Q.H. LIU]{
Dept. Computer Sci.,
Beijing Institute of Technology,
Beijing 100081, PR China.}
\email{qhliu@bit.edu.cn}
\author{Yanhui QU}
\address[Y.H. QU]{Dept. Math., Tsinghua University, Beijing 100084, PR China.}
\email{yhqu@math.tsinghua.edu.cn}

\begin{abstract}
We show that  the Hausdorff dimension of the spectrum  of the Thue-Morse Hamiltonian  has a common positive lower bound for all coupling.
\end{abstract}

\maketitle


\section{Introduction}

Given a bounded real sequence $v=\{v(n)\}_{n\in \Z}$ and a real number $\lambda\in \R$, we can define the so-called {\it discrete Schr\"odinger operator } $H_{\lambda,v}$ acting on $\ell^2(\Z)$ as
$$
(H_{\lambda,v}\psi)(n)=\psi(n+1)+\psi(n-1)+\lambda v(n)\psi(n), \ \ \ \forall n\in \Z.
$$
Here $\lambda$ is called the {\it coupling} constant and $\lambda v$ is called the {\it potential}. It is well known that $H_{\lambda,v}$ is a self-adjoint operator and the spectrum  of $H_{\lambda,v}$ is a compact subset of $\R$, which we denote by $\sigma(H_{\lambda,v})$(see for example  \cite{CL}). We concern about the size of $\sigma(H_{\lambda,v}).$ For example  whether it has positive Lebesgue measure? If not, what is the Hausdorff dimension of it?
Note that zero Lebesgue measure spectrum implies
absence of absolutely continuous spectrum,
and dimension of spectrum has some relation with quantum dynamics(\cite{L2,DT}).

When $v$ is periodic, the spectral property of $H_{\lambda,v}$ is well understood, it is known that $\sigma(H_{\lambda,v})$ is a union of finite intervals and has positive Lebesgue measure(see for example \cite{CL}).

When $v$ is less ordered, the situation is more complicated. Several classes of quasi-periodic potentials are extensively studied during the previous three decades.  One famous class is the so-called {\it almost Mathieu} potential, where $v^{am}(n)=\cos(n\alpha+\theta)$ with $\alpha\in \R\setminus \Q$ and $\theta\in \R.$ It is known that in this case $|\sigma(H_{\lambda,v^{am}})|=|4-2\lambda|$(\cite{L,JK,AK}). Another class is the potential generated by primitive substitution. It is shown in \cite{BG,LTWW,Lenz} that if $v$ is generated by a primitive substitution, then the spectrum $\sigma(H_{\lambda,v})$ has Lebesgue measure 0.

Among substitution class, the most famous one is the so-called  Fibonacci potential, which  is defined as
$$
v^{F}(n)=\chi_{(1-\alpha,1]}(n\alpha+\theta),
$$
where $\alpha=(\sqrt{5}-1)/2$ is the Golden number  and $\theta\in \R.$ We note  that when $\theta=0,$  $v^F$ can also be defined through the Famous Fibonacci substitution $\tau$: $\tau(a)=ab$ and $\tau(b)=a$(see for example \cite{Fo} Section 5.4).
The operator $H_{\lambda,v^{F}}$ is called {\it Fibonacci  Hamiltonian}. Since the pioneer works \cite{KKT,OPRSS}, Fibonacci Hamiltonian is always the central model in quasicrystal and is extensively studied, see for example the  survey \cite{DEG} and the most recent progress \cite{DGY}. 
The dimensional properties of $\sigma(H_{\lambda,v^F})$ has been well understood until now, see \cite{R, JL,LW,DEGT,C,DG,DG2}. In purticular  the following property is shown in \cite{DEGT}:
$$
\lim_{|\lambda|\to\infty} \dim_H \sigma(H_{\lambda,v^F})\ln  |\lambda|=\ln (1+\sqrt{2}).
$$
This implies that $\dim_H \sigma(H_{\lambda,v^F})\to 0$ with the speed $1/\ln |\lambda|$ when $|\lambda|\to\infty.$

Another famous potential is the so-called   Thue-Morse potential $w$, which  is defined   as follows:  Let $\sigma$ be the Thue-Morse sbustitution such that $\sigma(a)=ab$ and $\sigma(b)=ba$,  let $u=u_1u_2\cdots:=\sigma^\infty(a).$ For $n\ge 1$, let  $w(n)=1$ if $u_n=a$; let $w(n)=-1$
 if $u_n=b;$ let $w(1-n)=w(n)$ for $n\ge 1.$ The operator $H_{\lambda,w}$
   is called {\it Thue-Morse Hamiltonian}. Thue-Morse Hamiltonian is also studied by many authors, see \cite{AAKMP,MBNP,RSL,AP1,AP,Lu,DT} and so on.
However compared with Fibonacci case, almost noting is known about the dimensional properties of $\sigma(H_{\lambda,w})$, except some numerical result about the box dimension given in \cite{AP}.

In this paper, we will show the following

\begin{thm}\label{main-bd}
For  Thue-Morse potential  $w$  and any $\lambda\in\R$,
 $$
 \dim_H\sigma(H_{\lambda,w}) \ge \frac{\ln 2}{140\ln 2.1}.
 $$
 \end{thm}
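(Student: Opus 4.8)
The plan is to exploit the structure of the Thue-Morse transfer matrices via the trace map. For the Thue-Morse substitution, the key objects are the traces $x_n(E)=\tr M_n(E)$ of the transfer matrices over the blocks $\sigma^n(a)$ (equivalently $\sigma^n(b)$, since these have the same trace), together with an auxiliary variable. It is classical (see \cite{AP1,AP,Lu}) that these traces satisfy a polynomial recursion; writing $x_n$ for the trace over a level-$n$ block one has relations of the form $x_{n+1}=x_n^2(y_n-2)+2$ or similar, where $y_n$ tracks the relevant cross term, and the spectrum is recovered as $\sigma(H_{\lambda,w})=\{E: (x_n(E))_{n\ge 0}\ \text{is bounded}\}$, in fact $\sigma(H_{\lambda,w})=\bigcap_n \sigma_n$ where $\sigma_n=\{E:|x_n(E)|\le 2\}$ (or a fixed constant). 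First I would set up this trace-map formalism carefully and record the precise recursion and the precise characterization of the spectrum as a nested intersection of bands $\sigma_n$, each of which is a finite union of compact intervals (the ``bands'' at level $n$).

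Next I would prove a \emph{uniform multiplicativity/branching estimate}: there exist a fixed integer $p$ and a constant $c>0$, \emph{independent of $\lambda$}, such that every band of $\sigma_n$ contains at least two bands of $\sigma_{n+p}$, and moreover the gaps and bands can be controlled so that passing from level $n$ to level $n+p$ replaces each interval $I$ of $\sigma_n$ by at least two subintervals, each of length at least $c\,|I|$ (or at least $|I|$ raised to a bounded power with bounded multiplicative loss). The point of working with a \emph{block} of $p$ substitution steps rather than a single step is that over a bounded number of steps one can find an energy interval on which the trace $x_{n+p}$, viewed as a function of $x_n$, sweeps across $[-2,2]$ at least twice with derivative bounded below, \emph{uniformly in $\lambda$}; this is where the explicit constant $2.1$ and the exponent $140$ will come from (roughly, $p$ and the logarithm of the derivative bound feed into $140\ln 2.1$). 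The structure of the Thue-Morse trace map — with its quadratic terms — is what makes such a uniform-in-$\lambda$ statement plausible: the coupling $\lambda$ enters only the initial conditions $x_0,x_1$, and after a few iterations the dynamics is governed by the universal polynomial map.

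Given the branching estimate, I would run the standard mass-distribution / Hungerford-type argument (as used for the Fibonacci Hamiltonian in \cite{R,DEGT,DG}): the nested intersection $\bigcap_n \sigma_{np}$ supports a Cantor set on which one builds a probability measure $\mu$ distributing mass equally among the (at least $2$) children of each interval; then $\mu(I_n)\le 2^{-n}$ for a level-$np$ interval, while $|I_n|\ge (\text{const})^n \cdot |I_0|$ by the length control, so the local dimension of $\mu$ is at least $\ln 2/(\text{const})$, and the mass distribution principle yields $\dim_H \sigma(H_{\lambda,w})\ge \ln2/(140\ln 2.1)$ after optimizing the constants. The main obstacle is the second step: establishing the branching-with-uniform-length-control estimate with constants that do not degenerate as $|\lambda|\to\infty$. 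For large $\lambda$ the initial traces are huge and the bands are exponentially thin, so one must show that the \emph{relative} geometry (ratio of child length to parent length, number of children) stabilizes; this requires a careful case analysis of the trace map orbit $(x_n,y_n)$ depending on where $(x_0,x_1)$ lands, and a quantitative inverse-function argument to convert ``the trace oscillates'' into ``there are two children each of definite relative size.'' Everything after that is routine.
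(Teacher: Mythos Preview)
Your architecture is right --- build a Cantor subset of the spectrum via a branching/ratio estimate and apply the mass distribution principle --- and it matches the paper's overall scheme. But the step you flag as ``the main obstacle'' is exactly where the substance lies, and your proposed attack (case analysis on the orbit of $(x_n,y_n)$, inverse-function estimates on ``$x_{n+p}$ as a function of $x_n$'') does not get at the mechanism that makes the constants uniform in $\lambda$. Note first that the Thue-Morse recursion $h_{n+1}=h_{n-1}^2(h_n-2)+2$ is genuinely two-step, so $h_{n+p}$ is not a function of $h_n$ alone; and the na\"ive band picture $\{|h_n|\le 2\}$ is treacherous here because if $h_n(x_0)=0$ then $h_k(x_0)=2$ for all $k\ge n+2$, so the graphs of $h_k$ are \emph{tangent} to the line $y=2$ at $x_0$ --- the bands touch rather than nest transversally, and a direct derivative/oscillation count as you suggest degenerates.

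The missing idea is this tangency itself: near a zero $x_0$ of $h_n$ one has $h_{n+k}(x)=2-(2^{k-3}\rho)^2(x-x_0)^2+O((x-x_0)^3)$ for $k\ge 3$, and after rescaling $x\mapsto x/(2^{k-3}\rho)+x_0$ the polynomials converge to $2\cos x$. The paper quantifies this by introducing a ``$(\delta,\beta)$-regular germ'' (coefficient-wise closeness of the rescaled pair to $2\cos x$), shows that $(h_4,h_5)$ is $(1,1)$-regular at $a_\emptyset=\sqrt{2+\lambda^2}$ with explicit $\lambda$-independent bounds, and proves an inductive lemma: if a pair is strongly regular at $x_0$ then after $K=140$ iterations it is strongly regular both at $x_0$ and at the nearest new zero $y_0$, with the child/parent length ratios bounded below by $2.1^{-K}$. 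The uniformity in $\lambda$ comes for free because the local picture is governed by the universal limit $2\cos x$, not by the initial data; this is the insight your outline lacks, and without it I do not see how your ``careful case analysis'' over all $\lambda$ would close.
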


 \begin{rem}
 {\rm
 1.
 Compare with the Fibonacci case, our result is a bit surprising since in Fibonacci case $\dim_H \sigma(H_{\lambda,v^F})$ will tends to 0 with speed $\ln |\lambda|$ when $|\lambda|\to\infty,$ however in Thue-Morse case there exists an absolute positive lower bound for $\dim_H \sigma(H_{\lambda,w})$
 for all $\lambda\in \R.$

 2. The lower bound is not optimal. It can be improved  through a  finer estimation. We do not pursue this since it does not give  the exact dimension.
 }
 \end{rem}

 In the following we say a few words about  the idea of the proof. Our method bases on the analysis of the behavior of the trace polynomials related to Thue-Morse Hamiltonian.   Some convergence behavior  hidden in these  polynomials enable us to construct a Cantor subset of the spectrum, meanwhile the Hausdorff dimension of the Cantor set can be estimated, which in turn offer a lower bound of the spectrum.

    Let us recall the definition of trace polynomials of Thue-Morse Hamiltonian, see \cite{AP,B} for more details and motivations about trace polynomials.
 Recall that $\sigma$ is the Thue-Morse substitution such that $\sigma(a)=ab$ and $\sigma(b)=ba.$ Denote the free group generated by $a,b$ as ${\rm FG}(a,b).$ Given $\lambda, x\in \R,$  define a homomorphism $\tau:{\rm
FG}(a,b)\to {\rm SL}(2,\R)$ as
$$
\tau(a)= \left[
\begin{array}{cc}
x-\lambda&-1\\
1&0
\end{array}
\right]\ \ \ \text{ and }\ \ \ \tau(b)= \left[
\begin{array}{cc}
x+\lambda&-1\\
1&0
\end{array}
\right]
$$
and $\tau(a_1\cdots a_n)=\tau(a_n)\cdots\tau(a_1).$   Define
$
h_n(x):={\rm tr}(\tau(\sigma^n(a)))
$
(where ${\rm tr}(A)$ denotes the trace of the matrix $A$),
then(\cite{AP,B})
\begin{equation}\label{h12}
h_1(x)=x^2-\lambda^2-2,\quad h_2(x)=(x^2-\lambda^2)^2-4x^2+2,
\end{equation}
and for $n\ge2$,
\begin{equation}\label{recurrence}
h_{n+1}(x)=h_{n-1}^2(x)(h_n(x)-2)+2.
\end{equation}
$\{h_n: n\ge 1\}$ is called the family  of {\it trace polynomials} related to Thue-Morse Hamiltonian.

 Define the zero set of the trace polynomials as
\begin{equation}\label{Sigma}
\Sigma=\{x\in\mathbb{R}: \exists n\ge 1,\ \  s.t.\ \ \  h_n(x)=0\}.
\end{equation}
It is shown in \cite{AP,B} that $\Sigma\subset \sigma(H_{\lambda,w})$. Since $\sigma(H_{\lambda,w})$ is closed, the closure of $\Sigma$ is also  a subset of the spectrum
(indeed  the closure of $\Sigma$ is exactly  the spectrum).
The set $\Sigma$ play a crucial role in our proof.
By the recurrence relation \eqref{recurrence} it is direct to check that, for any $x\in\Sigma$, if $h_n(x)=0$, then $h_{k}(x)=2$ for any $k\ge n+2$. Moreover, $x$ is a local maximal point of $h_k$ and the graph of $h_k$ is tangent to the horizontal line $y=2$ at point $(x,2).$  Figure \ref{attractor} shows the  typical configuration
when we plot the graphs of $\{h_n(x)\}$ around a point $x\in \Sigma$.

\begin{figure}[h]
\includegraphics[width=0.6\textwidth]{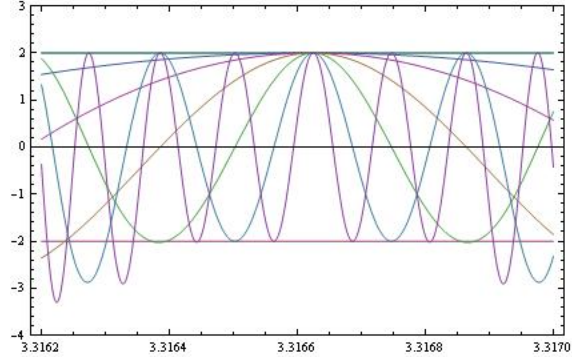}
\caption{Graph of $h_k(x)$, $k=2,\cdots,9$\ \ \  ($\lambda=3$).}\label{attractor}
\end{figure}

At first we describe a naive way of obtaining a lower bound for dimension of spectrum.
In Figure \ref{keygraph}, $a$ is such that $h_1(a)=0,$ then $h_4(a)=2.$ $h_4(x)$ is decreasing on the interval $[a,b]$ with $h_4(b)=0$,
$h_6(x)$ is decreasing   on $[a,c]$ with $h_6(c )=0$ and increasing on  $[d,b]$ with  $h_6(d)=0$. Consequently  $a,b,c,d\in \Sigma.$
Similarly when restricting to $[a,c]$ and drawing the graphs of $h_6$ and $h_8$, we can find two subintervals of $[a,c]$ such that the endpoints of these intervals are all in $\Sigma.$ For $[d,b]$ the situation is the same.
If we continue this process,  we can obtain  a covering structure which determines a Cantor set $E$ as its limit set.
Moreover by the construction all the endpoints of those intervals are in $\Sigma$. Since
 $E$ is the closure of all these endpoints, we conclude that $E$ is contained in the
closure of $\Sigma$, and hence contained in the spectrum.
The Hausdorff dimension of $E$ offers  a lower bound of
Hausdorff dimension of the spectrum.

\begin{figure}[h]
\includegraphics[width=1\textwidth]{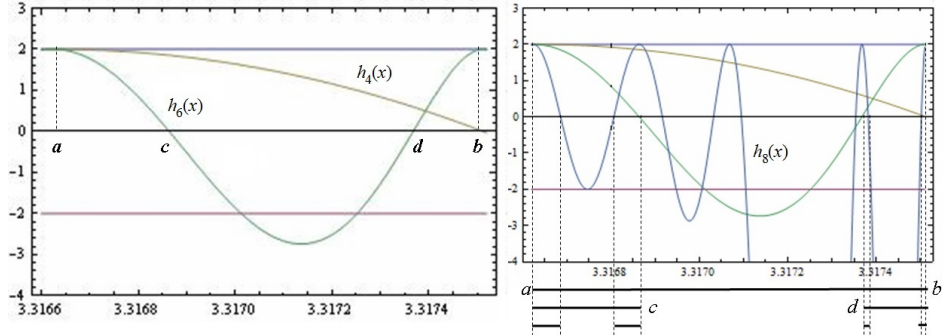}
\caption{The graph of $h_4(x),h_6(x),h_8(x)$ for $x\in(3.3166,3.3175)$.}\label{keygraph}
\end{figure}

To estimate the dimension of $E$, we need to estimate  ratios such as  $(c-a)/(b-a)$ and $(b-d)/(b-a)$.
However Figure \ref{keygraph} already suggests that the ratios may be out of control. We need more information about  trace polynomials and more delicate construction of subset.

A key observation is the following: fix any point $x_0$ in $\Sigma$  and  assume $h_n(x_0)=0$,
then there exists a scaling  factor $\rho$ such that
the rescaled sequence  $h_{n+3+k}(\frac{x}{2^k\rho}+x_0)$
will be more and more close to $2\cos x$ on any fixed interval $[-c,c]$ as $k$ tends to infinity(see again Figure \ref{attractor} for this phenomenon).  This closeness enable us to construct  a Cantor subset $E$ of $\sigma(H_{\lambda,w})$ in a controllable  way such that  the lower bound of $\dim_H E$ can be estimated explicitly.


More precisely we will start with   a polynomial pair $(P_{-1},P_0)$, which  have the following  expansions at $x_0:$
$$
 \begin{cases}
 P_{-1}(x)&=2-\frac{\rho^2}{4}(x-x_0)^2+O((x-x_0)^3);\\
  P_{0}(x)&=2-\rho^2(x-x_0)^2+O((x-x_0)^3).
 \end{cases}
$$

We can  define two kinds of closeness of $(P_{-1}, P_0)$ to $2\cos x$ at $x_0$: weak one and strong one(see Remark \ref{closeness}).
Then we  define a polynomial sequence $\{P_n:n\ge -1\}$ according to \eqref{recurrence}.

A crucial step is to establish the following inductive lemma (Lemma \ref{iterband}): There exists an absolute constant $K=140$ such that the following holds.
Assume $(P_{-1}, P_0)$ is strongly close to $2\cos x$ at $x_0$. Let  $y_0$ be  the minimal $y>x_0$ such that $P_0(y_0)=0$, then  $(P_{K-1},P_K)$ is strongly close to $2\cos x$ at both $x_0$ and $y_0$.
Moreover the following estimates hold:
$$\frac{y_K-x_0}{y_0-x_0}\ge 2.1^{-K}\ \ \ \text{ and }\ \ \  \frac{y_0-x_K}{y_0-x_0}\ge 2.1^{-K},$$
where $y_K$ is the minimal $y>x_0$ such that $P_K(y)=0$,
and $x_K$ is the maximal $y<y_0$ such that $P_K(y)=0$.

\begin{figure}[h]
\includegraphics[width=0.5\textwidth]{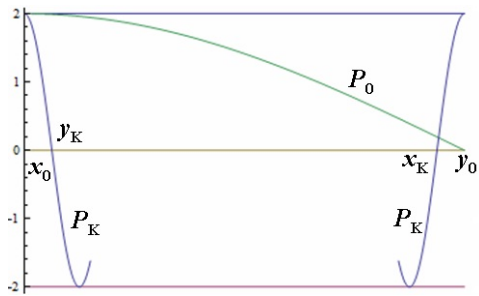}
\caption{Illustration of our process}\label{process}
\end{figure}

Note that this is the  right-side version of Lemma \ref{iterband}. If we define $y_0$ to  be  the maximal $y<x_0$ such that $P_0(y_0)=0$, then we can state the left-side version similarly.

Now for the pair $(P_{K-1},P_K)$, since it is strongly close to $2\cos x$ at both $x_0 $ and $y_0$, we can continue the process. Then inductively we can construct a Cantor set, for which we can estimate the  dimension.

Then what left is to find a trace polynomial pair such that it is indeed strongly close to $2\cos x$. We achieve this by two steps: at first we show that if $(P_{-1},P_0)$ is weakly close to $2\cos x$ at $x_0$, then $(P_{K-1},P_K)$ is strongly  close to $2\cos x$ at $x_0$(see Lemma \ref{basic})); next we show that $(h_4,h_5)$ is weakly close to $2\cos x$ at $a_\emptyset$, where $a_\emptyset$ is a zero of $h_1$(see Lemma \ref{initial-condition}). This will finish the proof of the main result.

 The rest of the paper is organized as follows. In Section \ref{sec-2}, we introduce the basic notations, state the main lemmas which are needed for the proof of main theorem. Then we finish the proof of Theorem \ref{main-bd}. In Section \ref{sec-3}, we prove Lemma \ref{initial-condition} and Lemma \ref{basic}. In Section \ref{inductive-lem}, we prove Lemma \ref{iterband}.


\section{Germ, closeness, proof of Theorem \ref{main-bd}}\label{sec-2}

In this section, at first we will introduce the notation of germ, which is the typical  configuration of the trace polynomial pairs $(h_{n-1},h_{n})$ around a zero point.  Next we will define the regularity of germ, which measure the closeness between the rescaled polynomial pair and $2\cos x$.  Then we will state several lemmas which describe the properties of regular germs. Base on these lemmas we will prove the main theorem.

 \subsection{Regular germ}\label{sec-21}\

 Given a polynomial pair  $(P_{-1}, P_0)$. Assume at  $x_0\in\mathbb{R}$,
 there exists $\rho>0$ such that
$$
 \begin{cases}
 P_{-1}(x)&=2-\frac{\rho^2}{4}(x-x_0)^2+O((x-x_0)^3);\\
  P_{0}(x)&=2-\rho^2(x-x_0)^2+O((x-x_0)^3)
 \end{cases}
$$
Then we say $(P_{-1},P_0)$ has a {\it  $\rho$-germ } at $x_0$.

We want to rescale  $P_{-1}$ and $P_0$ and compare them with $2\cos x.$ For this purpose,
for $k=-1$ and $0$ define
$$Q_k(x)=P_k(\frac{x}{2^k\rho}+x_0).
$$
It is ready to see  that
 $
 Q_{k}(x)=2-x^2+O(x^3).
 $
Since $2\cos x=2-x^2+O(x^3)$,  we have   $Q_k(x)=2\cos x +O(x^3).$
Write $\Delta_k(x)= Q_k(x)-2\cos x,$ then
$$\Delta_k(x)=Q_k(x)-2\cos x=\sum_{k\ge 3} \Delta_{k,n}x^n.
$$

We want to define a kind of smallness for $\Delta_k$ through its coefficients. Let us do some preparation.
Given two formal series with real coefficients
$$
f(x)=\sum_{n=0}^\infty a_nx^n \ \ \ \ \text{ and }\ \ \ \ g(x)=\sum_{n=0}^\infty b_nx^n,
$$
  define the following partial order:
$$
f\preceq g  \Leftrightarrow a_n\le b_n  \ \ (\forall n\ge 0).
$$
We further  define $|f(x)|^\ast:=\sum_{n=0}^\infty |a_n|x^n.$ Then it is easy to check that
$$
|fg|^\ast\preceq |f|^\ast|g|^\ast\ \ \ \text{ and }\ \ |f+g|^\ast\preceq |f|^\ast+|g|^\ast.
$$
Moreover if $|f|^\ast\preceq \tilde f$ and $|g|^\ast\preceq \tilde g$, then it is seen that $|fg|^\ast\preceq \tilde f\tilde g.$
Later we will use these properties repeatedly to estimate the coefficients of certain series.

Let us go back to $(P_{-1}, P_0)$.
 If moreover there exist $\delta>0$ and $\beta\ge1$ such that
$$
|\Delta_{-1}|^\ast, |\Delta_0|^\ast\preceq \delta\sum_{n=3}^\infty \frac{x^n}{\beta^n}
$$
Then we say that $(P_{-1},P_0)$  has a  {\it  $(\delta,\beta)$-regular $\rho$-germ at $x_0$}.  We also say that $(P_{-1},P_0)$  is  {\it  $(\delta,\beta)$-regular at $x_0$
with scaling  factor $\rho$}, or simply  {\it  $(\delta,\beta)$-regular at $x_0$}.
An immediate  observation is that if $\delta\le \delta^\prime$ and $\beta\ge \beta^\prime$ and $(P_{-1},P_0)$  is   $(\delta,\beta)$-regular at $x_0$, then $(P_{-1},P_0)$  is also   $(\delta^\prime,\beta^\prime)$-regular at $x_0$.

Recall that   $\{h_n(x):n\ge 1\}$ is the family of trace polynomials of Thue-Morse Hamiltonian
satisfy \eqref{h12} and \eqref{recurrence}. Let
\begin{equation}\label{start}
a_\emptyset=\sqrt{2+\lambda^2},
\end{equation}
then  $h_1(a_\emptyset)=0.$
The following lemma is the starting point of our whole proof.

\begin{lem}\label{initial-condition}
$(h_4,h_5)$ is $(1,1)$-regular at $a_\emptyset$.
\end{lem}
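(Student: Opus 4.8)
The plan is to verify the defining inequalities of a $(1,1)$-regular $\rho$-germ directly for the pair $(h_4,h_5)$ at the explicit point $a_\emptyset=\sqrt{2+\lambda^2}$. First I would identify the scaling factor $\rho$. Since $h_1(a_\emptyset)=0$, the recurrence \eqref{recurrence} forces $h_4(a_\emptyset)=h_5(a_\emptyset)=2$ (using $h_3=h_1^2(h_2-2)+2$, $h_4=h_2^2(h_3-2)+2$, $h_5=h_3^2(h_4-2)+2$, and the fact that $h_k=2$ for $k\ge n+2$ once $h_n=0$), so both polynomials have a double point at $(a_\emptyset,2)$ and the germ structure is automatic; the value of $\rho$ is then read off from the second-order Taylor coefficient of $h_5$, namely $h_5(x)=2-\rho^2(x-a_\emptyset)^2+O((x-a_\emptyset)^3)$, which one computes by differentiating the recurrence twice at $a_\emptyset$ in terms of $h_1'(a_\emptyset)=2a_\emptyset$ (note $h_1'(a_\emptyset)\neq 0$, which is what makes the second derivative of $h_4,h_5$ nonzero). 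One must also check the compatibility relation $P_{-1}$-coefficient $=\rho^2/4$ for $h_4$, i.e. that the curvature of $h_4$ at $a_\emptyset$ is exactly one quarter that of $h_5$; this should drop out of the chain rule applied to $h_4=h_2^2(h_3-2)+2$ and $h_5=h_3^2(h_4-2)+2$ together with the vanishing of $h_2-2,h_3-2,h_4-2$ and their first derivatives at $a_\emptyset$.

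The substantive part is the coefficient bound: after forming $Q_{-1}(x)=h_4(\tfrac{x}{2^{-1}\rho}+a_\emptyset)$ and $Q_0(x)=h_5(\tfrac{x}{\rho}+a_\emptyset)$ and subtracting $2\cos x$, I must show
$$
|\Delta_{-1}|^\ast,\ |\Delta_0|^\ast\ \preceq\ \sum_{n=3}^\infty \frac{x^n}{1^n}=\sum_{n=3}^\infty x^n,
$$
i.e. every coefficient of the error series has absolute value at most $1$ (this is the $\delta=1$, $\beta=1$ case). I would do this by obtaining a clean closed form, or a manageable majorant, for $Q_{-1}$ and $Q_0$ as polynomials in $x$. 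Since $h_4,h_5$ are honest polynomials of known (even) degree in $x$ — degree $12$ and $24$ respectively in $x$ over the appropriate field — and $a_\emptyset$ is algebraic with $a_\emptyset^2=2+\lambda^2$, the rescaled polynomials $Q_{-1},Q_0$ have only finitely many nonzero coefficients, and one can in principle write them all out; the coefficients of $2\cos x$ past degree $2$ are $\frac{(-1)^m}{(2m)!}$ (and $0$ in odd degrees), which are tiny, so the bound $|\Delta_{k,n}|\le 1$ reduces to bounding the coefficients of the rescaled $h_4,h_5$ themselves. The key structural point here is uniformity in $\lambda$: after rescaling by $\rho$ (which itself depends on $\lambda$ through $a_\emptyset$), the dependence on $\lambda$ should either cancel or be controlled, because $\rho$ grows with $|\lambda|$ fast enough to damp the large coefficients of $h_4,h_5$. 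I expect to verify this by tracking how $\rho$ scales: roughly $\rho\asymp$ a fixed power of $a_\emptyset$, and each extra degree in $x$ in $Q_k$ carries a factor $\rho^{-1}$, which beats the polynomial-in-$\lambda$ growth of the raw coefficients.

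The main obstacle I anticipate is precisely this $\lambda$-uniform coefficient estimate: one is claiming a single constant ($\delta=1$) works for \emph{all} coupling $\lambda\in\R$, including the delicate small-$\lambda$ regime where $a_\emptyset$ is bounded below by $\sqrt2$ but not large, so there is no decay to exploit and the inequality must hold by an exact numerical margin. My strategy for that regime would be to treat $\rho$ as an explicit algebraic function of $t:=\lambda^2\ge 0$, write $Q_{-1}(x),Q_0(x)$ as polynomials in $x$ with coefficients that are rational functions of $t$, and then bound each such coefficient over $t\in[0,\infty)$ by an elementary monotonicity/calculus argument — checking the endpoints $t=0$ and $t\to\infty$ and the finitely many critical points. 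A secondary, more bookkeeping-heavy obstacle is simply computing the low-order Taylor data of $h_3,h_4,h_5$ at $a_\emptyset$ correctly through third order, since the regularity condition involves the $O((x-x_0)^3)$ terms as well; this is routine repeated differentiation of \eqref{recurrence} but must be done carefully to pin down $\rho$ and to start the $|\Delta_k|^\ast$ bound at $n=3$.
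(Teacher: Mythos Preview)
Your overall plan is sound in principle but takes a brute-force route quite different from the paper's. Two minor slips first: the degrees of $h_4,h_5$ are $16$ and $32$, not $12$ and $24$ (each step of the recurrence doubles the degree: $2,4,8,16,32$); and $h_2-2$ does \emph{not} vanish at $a_\emptyset$ --- in fact $h_2(a_\emptyset)=-2-4\lambda^2$ --- so your sketch of the curvature-ratio check needs adjusting. The ratio $4$ between the second-order coefficients of $h_4$ and $h_5$ comes instead from the general pattern of Lemma~\ref{suff-condi}: once $h_1(a_\emptyset)=0$, one gets $h_k(x)=2-4^{k-3}(2-h_2(a_\emptyset))\bigl(h_1'(a_\emptyset)h_2(a_\emptyset)\bigr)^2(x-a_\emptyset)^2+O((x-a_\emptyset)^3)$ for all $k\ge 3$, and the factor $4^{k-3}$ delivers the required $1:4$ ratio between $k=4$ and $k=5$.

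The substantive methodological difference is this. You propose to write out all (up to $32$) coefficients of the rescaled $h_4,h_5$ as explicit algebraic functions of $\lambda$ (or of $t=2a_\emptyset$) and then bound each one over $t\in[2\sqrt2,\infty)$ by calculus. This would work, but is laborious --- and the coefficients are not rational in $\lambda^2$, since $\rho$ carries a factor $\sqrt{t^2-4}$, so odd-degree coefficients involve that surd. The paper instead exploits the recurrence \eqref{recurrence} at the level of the majorant order $\preceq$: after the substitution $f_n(x)=h_n(x/\tau+a_\emptyset)$ with $\tau=t(t^2-6)\sqrt{t^2-4}$, one verifies directly the one-line bounds $|f_1(x)|^*\preceq \tfrac{t}{\tau}\,x\,e^{x/32}$ and $|f_2(x)-2|^*\preceq (t^2-4)e^{x/4}$ (uniform in $t\ge 2\sqrt2$), and then the inequality $|f_n|^*\preceq 2+(|f_{n-2}|^*)^2\,|f_{n-1}-2|^*$ propagates these exponential majorants up to $f_4$ and $f_5(\cdot/2)$, where the comparison with $2\cos x$ is immediate. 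The $\lambda$-uniformity, which you rightly flag as the crux, is thus absorbed once and for all into the ratios $t/\tau$, $(t^2-4)/\tau^2$, etc., rather than being re-established coefficient by coefficient. Your approach would reach the same conclusion with substantially more bookkeeping; the paper's majorant-series trick is the cleaner path and, importantly, is the template reused later in Propositions~\ref{model} and~\ref{model2}.
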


The following lemma shows that  the germ can keep when we iterating the pair. Moreover the regularity will become better and  better.

\begin{lem}\label{basic}
Assume $(P_{-1}, P_0)$ has a $\rho$-germ at $x_0$.
For $k\ge1$, define
\begin{equation}\label{iter}
P_k=P_{k-2}^2(P_{k-1}-2)+2.
\end{equation}
Then

1) $(P_{k-1},P_k)$ has a $2^k\rho$-germ at $x_0$ for any $k\ge 1.$

2) If $(P_{-1},P_0)$ is $(1,1)$-regular at $x_0$, then $(P_{k-1},P_k)$ is $(3200\cdot2^{-k/2},4)$-regular at $x_0$ for any $k\ge4.$

 \end{lem}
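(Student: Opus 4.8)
The plan is to work entirely at the level of formal power series (germs) centered at $x_0$, exploiting the partial order $\preceq$ and the operation $|\cdot|^\ast$ introduced above. First I would handle part 1). Writing $P_{-1}(x) = 2 - \tfrac{\rho^2}{4}(x-x_0)^2 + O((x-x_0)^3)$ and $P_0(x) = 2 - \rho^2(x-x_0)^2 + O((x-x_0)^3)$, I would compute the leading behavior of $P_1 = P_{-1}^2(P_0 - 2) + 2$ at $x_0$: since $P_{-1}(x_0) = 2$ and $P_0(x) - 2 = -\rho^2(x-x_0)^2 + O((x-x_0)^3)$, we get $P_1(x) - 2 = 4\cdot(-\rho^2(x-x_0)^2) + O((x-x_0)^3) = -(2\rho)^2(x-x_0)^2 + O((x-x_0)^3)$, while $P_0(x) - 2 = -\tfrac{(2\rho)^2}{4}(x-x_0)^2 + O((x-x_0)^3)$. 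So $(P_0,P_1)$ has a $2\rho$-germ at $x_0$. Then induction on $k$ using \eqref{iter} gives the claim: if $(P_{k-2},P_{k-1})$ has a $2^{k-1}\rho$-germ, the same computation with $\rho$ replaced by $2^{k-1}\rho$ shows $(P_{k-1},P_k)$ has a $2^k\rho$-germ. This part is routine bookkeeping of the lowest-order terms.

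The substance is part 2), the quantitative decay of the error. Define $\Delta_k(x) = Q_k(x) - 2\cos x$ where $Q_k(x) = P_k(\tfrac{x}{2^k\rho} + x_0)$, as in the text. The rescaling is chosen precisely so that the recurrence \eqref{iter} becomes, after substitution $x \mapsto \tfrac{x}{2^k\rho}$, a clean recurrence relating $Q_k$ to $Q_{k-1}$ and $Q_{k-2}$ evaluated at rescaled arguments: concretely $Q_k(x) = Q_{k-2}(x/4)^2\,(Q_{k-1}(x/2) - 2) + 2$. Subtracting the identity $2\cos x = (2\cos(x/4))^2(2\cos(x/2) - 2) + 2$ (which follows from the double-angle formula $2\cos(x/2) - 2 = -(2\sin(x/4))^2$ and $(2\cos(x/4))^2 - (2\sin(x/4))^2 \cdot$ — in fact from $2\cos x = 2(2\cos^2(x/2)-1)$ applied twice), one obtains a linear-plus-quadratic recursion for $\Delta_k$ in terms of $\Delta_{k-1}, \Delta_{k-2}$ with coefficients that are themselves trig polynomials whose $|\cdot|^\ast$-majorants are controlled. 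Taking $|\cdot|^\ast$ throughout and applying the submultiplicativity $|fg|^\ast \preceq |f|^\ast|g|^\ast$, I would derive an inequality of the shape
$$
|\Delta_k|^\ast \preceq C_1\big(|\Delta_{k-1}|^\ast(x/2) + |\Delta_{k-2}|^\ast(x/4)\big) + C_2\big(|\Delta_{k-1}|^\ast(x/2)^{\,?} \cdots\big),
$$
where the crucial gain is the argument rescaling by $1/2$ and $1/4$: if $|\Delta_{k-1}|^\ast \preceq \delta_{k-1}\sum_{n\ge 3} x^n/\beta^n$, then $|\Delta_{k-1}|^\ast(x/2) \preceq (\delta_{k-1}/2^3)\sum x^n/(2^{-1}\beta)^{-n}$ — wait, more carefully, substituting $x/2$ improves both the prefactor (by $2^{-3}$ from the $n\ge 3$ truncation) and effectively doubles $\beta$. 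Propagating an ansatz $|\Delta_k|^\ast \preceq \delta_k \sum_{n\ge 3} x^n/\beta_k^n$, one gets a recursion $\delta_k \le A(\delta_{k-1} + \delta_{k-2}) \cdot 2^{-3} + (\text{quadratic terms})$ and $\beta_k \geq 2\min(\beta_{k-1},\beta_{k-2})$-type growth, which one then has to show stabilizes at $\beta = 4$ and forces $\delta_k \le 3200\cdot 2^{-k/2}$ for $k \ge 4$. The factor $2^{-k/2}$ (rather than $2^{-k}$) reflects that the two-step linear recursion $\delta_k \lesssim \text{const}\cdot(\delta_{k-1}+\delta_{k-2})$ with a contraction factor near $2^{-3}$ has characteristic root comfortably below $2^{-1/2}$; one picks up $2^{-1/2}$ as a safe, clean bound and absorbs everything else into the constant $3200$.

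The main obstacle — and where I would spend the real effort — is making the constants explicit and verified. One must: (i) pin down the exact recursion for $Q_k$ after rescaling and the exact trig identity it should converge to, being careful about the factor $4 = 2^2$ appearing in $P_{k-2}^2$ versus the rescaling $2^k\rho$ (this is why the argument of $Q_{k-2}$ gets divided by $4$, which is the source of the decay); (ii) bound $|\cdot|^\ast$ of the coefficient polynomials $2\cos(x/4)$, $2\sin(x/4)$, etc. — these are entire with rapidly decaying coefficients, so $|2\cos(x/4)|^\ast \preceq 2\cosh(x/4)$ and similarly, and one needs these majorants evaluated against the geometric series $\sum x^n/4^n$ to close the induction; (iii) check the base cases $k=4,5$ (or however many steps are needed to enter the stable regime) by a finite explicit computation starting from the $(1,1)$-regularity hypothesis, verifying $\delta_4, \delta_5 \le 3200\cdot 2^{-k/2}$ and $\beta_4,\beta_5 \ge 4$ directly. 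The inductive step then only needs the contraction, which is where the numbers $3200$ and $4$ are chosen to make the algebra self-consistent: one verifies $3200\cdot 2^{-k/2}$ is reproduced, i.e. that $A\cdot 2^{-3}\cdot(2^{1/2} + 1)\cdot 3200 \le 3200$ for the relevant constant $A$, plus the quadratic terms (which are $O(\delta_{k-1}^2)$ and hence negligible once $\delta$ is small) stay under control. I expect the bulk of the written proof to be this careful propagation of the $\preceq$-inequality and the verification of the finitely many seed inequalities.
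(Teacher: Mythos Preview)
Your proposal is correct and follows essentially the same route as the paper: derive the exact recursion for $\Delta_k$ from $Q_k(x)=Q_{k-2}(x/4)^2\bigl(Q_{k-1}(x/2)-2\bigr)+2$ together with the cosine double-angle identity, then propagate $|\cdot|^\ast$-majorants of geometric type using the contraction coming from the argument rescalings $x/2$ and $x/4$. The paper's execution differs from your simultaneous $(\delta_k,\beta_k)$-tracking only in organization: it first iterates at fixed $\beta=1$ to extract the full $2^{-k/2}$ decay of $\delta$ (each step halves $\delta$ when read back at $\beta=1$), and only then cashes in two further passes to upgrade $\beta$ from $1$ to $2$ to $4$, paying constants $18$ and roughly $1600$ at those steps---which is precisely the origin of the $3200$.
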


\subsection{Closeness and consequences }\

   Fix $\delta_0=10^{-2}, \delta_1=10^{-4}, \delta_2=10^{-16}$ and $K=140$. Then
\begin{equation}\label{coeff}
20\delta_1\le\delta_0; \quad
400^4\times 4\times 3\delta_2\le\delta_1; \quad 3200\cdot2^{-(K-4)/2}<\delta_2.
\end{equation}

\begin{rem}\label{closeness}
{\rm
Later in Section \ref{inductive-lem}, we will see that if $(P_{-1},P_0)$ is $(\delta,\beta)$-regular with $\delta$ small and $\beta$ big, then the rescaled  polynomials $Q_{-1}, Q_0$ will be close to $2\cos x$ in a bounded neighborhood of $0.$ In this sense $(\delta,\beta)$ give a measurement  of closeness between $(P_{-1},P_0)$ and $2\cos x$ around $x_0.$   We will   use the following three levels of closeness:
Assume $(P_{-1}, P_0)$ has a germ at $x_0$. If  $(P_{-1}, P_0)$ is $(1,1)$-regular (or $(\delta_1,2)$-regular, or $(\delta_2,4)$-regular) at $x_0$, we say that $(P_{-1}, P_0)$ is {\it weakly close ( close, or strongly close ) } to $2\cos x.$

}
\end{rem}

Our key technique lemma is the following:

\begin{lem}\label{iterband}
Let $(P_k)_{k\ge-1}$ satisfy the recurrence relation \eqref{iter}. Assume $(P_{-1},P_0)$ is $(\delta_2,4)$-regular at $x_0$.
For $k=0,K$, let $y_k^+$(resp. $y_k^-$) be the minimal $t>x_0$ (resp. maximal $t<x_0$) such that $P_k(t)=0$.
Let $x_K^+$ (resp. $x_K^-$) be the maximal $t<y_0^+$ (minimal $t>y_0^-$) such that $P_K(t)=0$. Then
$$\frac{|y_K^\pm-x_0|}{|y_0^\pm-x_0|}\ge 2.1^{-K},\quad
\frac{|y_0^\pm-x_K^\pm|}{|y_0^\pm-x_0|}\ge 2.1^{-K},$$
and $(P_{K-1},P_K)$ is $(\delta_2,4)$-regular at $x_0$ and $y_0^\pm$ respectively.
\end{lem}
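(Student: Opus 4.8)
The plan is to track the rescaled polynomials $Q_k(x) = P_k(x/(2^k\rho) + x_0)$ directly and exploit that they are uniformly close to $2\cos x$ on a fixed interval. Starting from the $(\delta_2,4)$-regular hypothesis, the estimates $|\Delta_{-1}|^\ast, |\Delta_0|^\ast \preceq \delta_2 \sum_{n\ge 3} x^n/4^n$ give, on any interval $|x| \le c$ with $c < 4$, a bound $|\Delta_k(x)| \le \delta_2 \cdot c^3/(4^3(1-c/4))$ together with matching bounds on $\Delta_k'$, $\Delta_k''$ via termwise differentiation. Taking $c$ fixed (say $c$ slightly larger than $\pi$, so that $2\cos x$ has a genuine zero at $x=\pi/2$ inside the window), these give $Q_0(x) = 2\cos x + O(\delta_2)$ in $C^2$ on $[-c,c]$. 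Since $2\cos x$ has a simple zero at $\pi/2$ with derivative $-2$, a quantitative implicit-function / Rouché argument shows $Q_0$ has a zero $z_0^+ \in (0,c)$ with $z_0^+ = \pi/2 + O(\delta_2)$; unscaling gives $y_0^+ - x_0 = z_0^+/\rho$, so $y_0^+ - x_0 \asymp 1/\rho$ with explicit constants. This is the "base geometry" I would nail down first.

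Next I would propagate regularity through $K$ steps of the recurrence \eqref{iter}. By Lemma \ref{basic}, part 1, each pair $(P_{k-1},P_k)$ still has a $2^k\rho$-germ at $x_0$, so the rescaled $Q_k$ are well-defined and $Q_k(x) = 2\cos x + O(x^3)$. The recurrence for $Q_k$ reads $Q_k(x) = Q_{k-2}(x/4)^2\bigl(Q_{k-1}(x/2) - 2\bigr) + 2$ — exactly the recurrence satisfied by $2\cos x$ itself, since $\cos x = 2\cos^2(x/2) - 1$ manifests as the $2\cos x$ fixed-point relation under doubling. Writing $Q_k = 2\cos x + \Delta_k$ and subtracting the $2\cos$-identity, one gets a linear-plus-quadratic recursion for the error series $\Delta_k$ with contraction coming from the rescaling by $1/2$ and $1/4$. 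Using the formal-series partial order $\preceq$ and the submultiplicativity of $|\cdot|^\ast$ recalled in the excerpt, I would show that if $|\Delta_{k-2}|^\ast, |\Delta_{k-1}|^\ast \preceq \delta \sum_{n\ge 3} x^n/4^n$ then $|\Delta_k|^\ast \preceq C\delta \sum_{n\ge 3} x^n/4^n$ with $C$ bounded (the factor-of-$\beta$ room between the natural $\beta$ and $4$, plus the choices in \eqref{coeff}, absorbs the constants); hence $(\delta_2,4)$-regularity at $x_0$ is preserved for all $k \le K$, in particular for $k = K$. This is essentially the mechanism of Lemma \ref{basic} part 2 run with a good starting $\delta$ rather than $\delta = 1$.

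With $C^2$-control of $Q_K$ near $\pi/2$ in hand, the zero-location estimates follow from the same quantitative implicit-function argument: $Q_K$ has zeros near $\pi/2$ and, reading the picture in Figure \ref{keygraph}/\ref{process}, $y_K^+$ corresponds to the first zero of $P_K$ to the right of $x_0$ while $x_K^+$ corresponds to the last zero of $P_K$ before $y_0^+$. After unscaling by $1/(2^K\rho)$ versus $1/\rho$, the ratio $|y_K^+ - x_0|/|y_0^+ - x_0|$ picks up a factor $2^{-K}$ from the scale change times an $O(1)$ factor from the zero positions; since $2.1 > 2$, the slack $(2.1/2)^K$ comfortably swallows that bounded factor, giving $|y_K^\pm - x_0|/|y_0^\pm - x_0| \ge 2.1^{-K}$, and symmetrically for $|y_0^\pm - x_K^\pm|/|y_0^\pm - x_0|$ — here one uses that $2\cos x$ also has a zero at $3\pi/2$ (or symmetrically $-\pi/2$), so the relevant zero of $Q_K$ near $y_0^+$ is again pinned down to within $O(\delta_2)$. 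Finally, to get $(\delta_2,4)$-regularity at $y_0^+$ (not just $x_0$), I would re-expand $Q_K$ at the rescaled point corresponding to $y_0^+$: since $2\cos x$ near its zero $\pi/2$ looks, after the appropriate affine rescaling, again like $2 - (\text{linear})^2 + \cdots$ — more precisely $2\cos(\pi/2 + t) = -2\sin t$, and one squares and shifts via the recurrence to land back on a $2\cos$-type germ — the $C^2$-closeness transfers, and the coefficient-level bound $\preceq \delta_2 \sum x^n/4^n$ at $y_0^+$ is obtained by the same $|\cdot|^\ast$ bookkeeping with the constants in \eqref{coeff} providing the necessary room.

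The main obstacle I expect is the last step: transferring \emph{coefficient-wise} (not merely $C^2$) regularity from $x_0$ to the new center $y_0^+$. Pointwise/$C^2$ smallness of $\Delta_K$ near $y_0^+$ is routine once $Q_K$ is $C^2$-close to $2\cos x$ on the whole window, but converting that analytic closeness into a clean $|\Delta_K|^\ast \preceq \delta_2 \sum_{n\ge 3} x^n/4^n$ bound at the shifted, re-rescaled point requires controlling how Taylor coefficients behave under the affine change of variables and the squaring in \eqref{iter}, and this is where the explicit numerology $20\delta_1 \le \delta_0$, $400^4\cdot 12\,\delta_2 \le \delta_1$, $3200\cdot 2^{-(K-4)/2} < \delta_2$ in \eqref{coeff} must be exactly matched. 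I would organize this by proving the inductive step of Lemma \ref{iterband} as a self-contained "germ at a zero" estimate, quantifying each constant, and only at the end checking that $K = 140$ makes every inequality go through with $2.1$ as the contraction base.
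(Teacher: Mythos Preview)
Your plan for the first ratio $|y_K^\pm - x_0|/|y_0^\pm - x_0| \ge 2.1^{-K}$ and for preserving $(\delta_2,4)$-regularity at $x_0$ matches the paper (Proposition~\ref{ratio} plus Lemma~\ref{control-coefficient}). The genuine gap is the second ratio and the regularity at $y_0^\pm$.

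In the $Q_K$-coordinate centered at $x_0$, the point $y_0^+$ sits at $2^K z_0^+ \approx 2^{K-1}\pi$, which for $K=140$ lies astronomically far outside the disk $|x|<4$ on which $(\delta_2,4)$-regularity tells you anything about $Q_K$. Hence you cannot ``re-expand $Q_K$ at the rescaled point corresponding to $y_0^+$'', and you cannot locate $x_K^+$ by invoking a zero of $2\cos x$ at $3\pi/2$: the zero $x_K^+$ lives near $y_0^+$, not near $x_0$, and nothing about $Q_K$ on $|x|<4$ constrains it. Your sentence ``$C^2$ smallness of $\Delta_K$ near $y_0^+$ is routine once $Q_K$ is $C^2$-close to $2\cos x$ on the whole window'' is precisely where the argument breaks --- $y_0^+$ is not in that window.

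The paper's fix is to work at the \emph{bottom} of the tower, not the top. Since $P_0(y_0^+)=0$, $P_0'(y_0^+)\ne 0$, and $P_1(y_0^+)\approx -2$ (all computable because $y_0^+$ corresponds to $t_0\approx\pi/2$ in the $Q_0$-coordinate, safely inside $|x|<4$), Lemma~\ref{suff-condi} manufactures a \emph{new} germ for $(P_3,P_4)$ at $y_0^+$ with scaling factor $\rho\approx 8a$. Proposition~\ref{branch2} then shows this germ is $(\delta_1,2)$-regular: one re-expands $\Delta_{-1},\Delta_0$ around $t_0/2$ and $t_0$, bounds all their Taylor coefficients there via \eqref{derivatives}, and pushes the shifted errors through four recurrence steps using a variant of Proposition~\ref{model} for series with nonzero low-order terms (Proposition~\ref{model2}), each step costing a factor $400$ --- this is the source of the $400^4$ in \eqref{coeff}. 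Only then does Lemma~\ref{control-coefficient}~2) bootstrap $(P_3,P_4)$ from $(\delta_1,2)$-regular up to $(P_{K-1},P_K)$ being $(\delta_2,4)$-regular at $y_0^+$, using the remaining $K-4$ steps and $3200\cdot 2^{-(K-4)/2}<\delta_2$. The second ratio follows by applying Proposition~\ref{ratio} step by step \emph{centered at $y_0^+$}, starting from the initial bound $|y_0^+-x_3^+|/|y_0^+-x_0|\ge 2.1^{-3}$, which comes from $\rho/a\approx 8$. Your instinct ``squares and shifts via the recurrence to land back on a $2\cos$-type germ'' is right, but the order is backwards: the germ at $y_0^+$ has to be born at level $3$--$4$ and then grown, not read off from $Q_K$.
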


See Figure \ref{process} for an illustration of the lemma. The crucial point is that, the strong closeness can pass to smaller  scale, which enable us to iterate the process.


\subsection{Proof of Theorem \ref{main-bd}}\

Now we will construct the desired Cantor set. To simplify the notation we write $\widetilde P_k(x):=h_{k+5}(x)$ for $k\ge -1.$ By Lemma \ref{initial-condition} we know that  $(\widetilde P_{-1},\widetilde P_0)$ is $(1,1)$-regular at $a_\emptyset.$

\begin{figure}[h]
\includegraphics[width=0.7\textwidth]{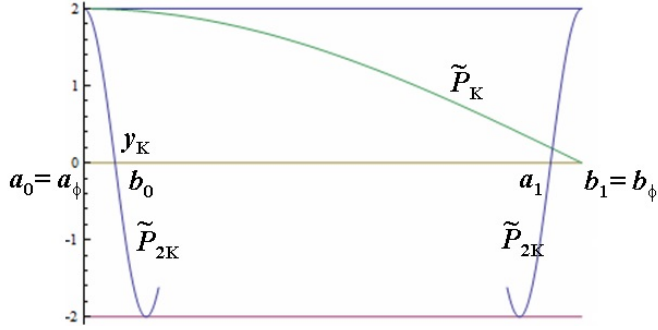}
\caption{Illustration of Cantor structure. \label{cantor} }
\end{figure}

We have fixed   $K=140$.
Assume $b_\emptyset>a_\emptyset$ is the first zero of $\widetilde P_{K}$ to
the right of $a_\emptyset.$ Define $I_\emptyset:=[a_\emptyset,b_\emptyset]$.
Assume $b_0$ is the smallest zero of $\widetilde P_{2K}$ in $I_\emptyset$ and $a_1$
is the biggest zero of $\widetilde P_{2K}$ in $I_\emptyset.$ Define
$$
I_0:=[a_\emptyset,b_0]=[a_0,b_0]\ \ \ \  \text{ and } \ \ \ I_1:=[a_1,b_\emptyset]=[a_1,b_1].
$$
Take any $w\in\{0,1\}^k$, suppose $I_w=[a_w,b_w]$ is defined.
 Assume $b_{w0}$ is the smallest zero of $\widetilde P_{(k+2)K}$ in $I_w$ and
 assume $a_{w1}$ is the biggest zero of $\widetilde P_{(k+2)K}$ in $I_w$.
 Write $a_{w0}=a_w$, $b_{w1}=b_w$ and define
 $$
 I_{w0}=[a_w,b_{w0}]=[a_{w0}, b_{w0}]\ \ \ \text{ and }\ \ \  I_{w1}
 =[a_{w1},b_{w}]=[a_{w1}, b_{w1}].
 $$
 Define  a Cantor set as
 $$
 \mathcal C:=\bigcap_{n\ge 0}\bigcup_{|w|=n}I_w
 $$

 \begin{prop}
  $$
 \dim_H \mathcal C \ge \frac{\ln 2}{K\ln 2.1}.
 $$
 \end{prop}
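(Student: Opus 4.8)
The plan is to derive the dimension bound for $\mathcal C$ from the geometry supplied by Lemma \ref{iterband} applied recursively along the binary tree. The first step is to set up the induction: by Lemma \ref{initial-condition}, $(\widetilde P_{-1},\widetilde P_0)=(h_4,h_5)$ is $(1,1)$-regular at $a_\emptyset$, and by Lemma \ref{basic} (or more directly by the chain of closeness in Remark \ref{closeness} and the inequalities \eqref{coeff}) the pair $(\widetilde P_{K-1},\widetilde P_K)$ is $(\delta_2,4)$-regular, i.e. strongly close to $2\cos x$, at $a_\emptyset$. Now I would prove by induction on $|w|=k$ that for every $w\in\{0,1\}^k$ the pair $(\widetilde P_{(k+1)K-1},\widetilde P_{(k+1)K})$ is strongly close to $2\cos x$ at both endpoints $a_w$ and $b_w$ of $I_w=[a_w,b_w]$, and that $\widetilde P_{(k+1)K}$ vanishes at $a_w$ and $b_w$. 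The inductive step is exactly one application of Lemma \ref{iterband}: starting from the $(\delta_2,4)$-regular germ of $(\widetilde P_{(k+1)K-1},\widetilde P_{(k+1)K})$ at (say) $a_w$, with $x_0=a_w$ and the relevant one-sided zero $y_0^+$ of $\widetilde P_{(k+1)K}$ being $b_w$ (this needs a short argument that the \emph{first} zero of $\widetilde P_{(k+1)K}$ to the right of $a_w$ is $b_w$, which follows because $I_w$ was itself carved out using the zeros of $\widetilde P_{(k+1)K}$), the lemma produces the two sub-bands: $I_{w0}=[a_w,b_{w0}]$ with $b_{w0}=y_K^+$, and the mirror construction at $b_w$ gives $I_{w1}=[a_{w1},b_w]$. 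The lemma simultaneously guarantees that $(\widetilde P_{(k+2)K-1},\widetilde P_{(k+2)K})$ is $(\delta_2,4)$-regular at all four of the new endpoints, closing the induction.

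The second step is the metric estimate. Lemma \ref{iterband} gives, at each level,
$$\frac{|I_{w0}|}{|I_w|}=\frac{b_{w0}-a_w}{b_w-a_w}\ge 2.1^{-K}\quad\text{and}\quad\frac{|I_{w1}|}{|I_w|}=\frac{b_w-a_{w1}}{b_w-a_w}\ge 2.1^{-K},$$
using $|y_K^+-x_0|/|y_0^+-x_0|\ge 2.1^{-K}$ for the first and $|y_0^--x_K^-|/|y_0^--x_0|\ge 2.1^{-K}$ (the left-side version) for the second. Thus each of the two children of a band $I_w$ has length at least $2.1^{-K}$ times that of the parent. Consequently, for any $w$ with $|w|=n$ we have $|I_w|\ge 2.1^{-Kn}|I_\emptyset|$, and the two children $I_{w0},I_{w1}$ are disjoint subintervals of $I_w$ (they share at most the structure of $I_w$ but are separated since $b_{w0}<a_{w1}$, as both are zeros of $\widetilde P_{(k+2)K}$ with $b_{w0}$ the smallest and $a_{w1}$ the biggest in $I_w$, and they are distinct because the rescaled polynomial is close to $2\cos x$ which has several zeros in the relevant window).

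The third step is to invoke the standard mass-distribution / Hungerford-type lower bound for the dimension of a Cantor set built from a tree in which every node has (at least) two children, each occupying a fixed proportion $c=2.1^{-K}$ of the parent, with the children disjoint. I would define the natural probability measure $\mu$ on $\mathcal C$ giving mass $2^{-n}$ to each $I_w$ with $|w|=n$; then for a generic small $r$, choosing $n$ with $2.1^{-K(n+1)}|I_\emptyset|\le r<2.1^{-Kn}|I_\emptyset|$, any interval of length $r$ meets a bounded number of level-$n$ bands, so $\mu(B(x,r))\lesssim 2^{-n}$, while $r\gtrsim 2.1^{-K(n+1)}|I_\emptyset|$, giving $\mu(B(x,r))\lesssim r^{s}$ with $s=\ln 2/(K\ln 2.1)$ up to a constant. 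By the mass distribution principle $\dim_H\mathcal C\ge s=\frac{\ln 2}{K\ln 2.1}$, as claimed. The main obstacle is bookkeeping rather than depth: one must verify carefully that the $I_w$ are genuinely nested and that the children are disjoint (so that the tree is really binary and the mass-distribution argument applies cleanly), and that the specific zero of $\widetilde P_{(k+2)K}$ used at each step is indeed the first/last one inside $I_w$ so that the endpoints match the $y_K^\pm,x_K^\pm$ produced by Lemma \ref{iterband} — both points reduce to the $2\cos x$ comparison on a fixed window that the $(\delta_2,4)$-regularity provides.
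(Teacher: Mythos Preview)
Your overall plan matches the paper's: establish inductively, via Lemma~\ref{iterband}, that each child band occupies at least a $2.1^{-K}$ fraction of its parent, then invoke the standard lower bound for the Hausdorff dimension of such a Cantor set (the paper cites Falconer; your mass-distribution sketch is the same thing unpacked).

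There is, however, a bookkeeping error in your inductive hypothesis that makes the induction fail as written. You assert that for $|w|=k$ the pair $(\widetilde P_{(k+1)K-1},\widetilde P_{(k+1)K})$ is strongly close to $2\cos x$ at \emph{both} endpoints $a_w,b_w$, and simultaneously that $\widetilde P_{(k+1)K}$ vanishes at $a_w$ and $b_w$. These two conditions are mutually exclusive: a germ at $x_0$ forces $P_0(x_0)=2$, not $0$. And the claim already fails for $w=\emptyset$: by construction $b_\emptyset$ is a zero of $\widetilde P_K$, so $(\widetilde P_{K-1},\widetilde P_K)$ has no germ there; likewise your ``closing the induction'' step asserts regularity of $(\widetilde P_{(k+2)K-1},\widetilde P_{(k+2)K})$ at $b_{w0}$ and $a_{w1}$, which are zeros of $\widetilde P_{(k+2)K}$ and hence not germ points either. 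The correct invariant --- the one the paper carries --- is that $(\widetilde P_{(k+2)K-1},\widetilde P_{(k+2)K})$ is $(\delta_2,4)$-regular at both $a_w$ and $b_w$. For $|w|=k$, only \emph{one} endpoint of $I_w$ is a germ point for $(\widetilde P_{(k+1)K-1},\widetilde P_{(k+1)K})$, namely the endpoint inherited from the parent $\tilde w$, while the other endpoint is the first zero $y_0^\pm$ of $\widetilde P_{(k+1)K}$ on the appropriate side. A single application of Lemma~\ref{iterband} at that germ endpoint then yields both ratio bounds $|I_{w0}|/|I_w|,\ |I_{w1}|/|I_w|\ge 2.1^{-K}$ (from the two estimates for $y_K^\pm$ and $x_K^\pm$) together with the $(\delta_2,4)$-regularity of $(\widetilde P_{(k+2)K-1},\widetilde P_{(k+2)K})$ at both $a_w$ and $b_w$, which is exactly the corrected inductive claim. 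With this shift of index your argument goes through.
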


\begin{proof}
We claim  that
$$
\frac{|I_{w0}|}{|I_w|}\ge 2.1^{-K},\quad \frac{|I_{w1}|}{|I_w|}\ge 2.1^{-K}$$
and $(\widetilde P_{(k+2)K-1},\widetilde P_{(k+2)K})$ is $(\delta_2,4)$-regular at $a_w, b_w$
for  any $w\in \{0,1\}^*$. We show it by induction.

At first take $w=\emptyset$. Recall that $(\widetilde P_{-1},\widetilde P_0)$ is $(1,1)$-regular at $a_\emptyset$. By Lemma \ref{basic} 2) and \eqref{coeff},
    $(\widetilde P_{K-1},\widetilde P_{K})$ is $(\delta_2,4)$-regular at $a_\emptyset$.
By applying Lemma \ref{iterband} to the pair $(\widetilde P_{K-1},\widetilde P_K)$, we conclude that
$$\frac{|I_0|}{|I_\emptyset|}\ge 2.1^{-K},\quad \frac{|I_1|}{|I_\emptyset|}\ge 2.1^{-K}$$
and $(\widetilde P_{2K-1},\widetilde P_{2K})$ is  $(\delta_2,4)$-regular at both $a_\emptyset$ and   $b_\emptyset$.

Next take $k>0$ and assume the result holds for any $w\in \{0,1\}^{k-1}$.
Then  fix $w\in\{0,1\}^k$ and write $w=\tilde w j$ with $j=0$ or $1$.
If $w=\tilde w0$, then by the construction of ${\mathcal C},$ $b_w=b_{\tilde w0}$ is the smallest zero of $\widetilde P_{(k+1)K}$ in $I_{\tilde w}$. Moreover  by induction assumption, $(\widetilde P_{(k+1)K-1},\widetilde P_{(k+1)K})$ is $(\delta_2,4)$-regular at $a_{\tilde w}=a_w$.
By applying  Lemma \ref{iterband} to pair $(\widetilde P_{(k+1)K-1},\widetilde P_{(k+1)K})$ we conclude that
$$
\frac{|I_{w0}|}{|I_w|}\ge 2.1^{-K},\quad \frac{|I_{w1}|}{|I_w|}\ge 2.1^{-K}
$$
 and $(\widetilde P_{(k+2)K-1},\widetilde P_{(k+2)K})$ is  $(\delta_2,4)$-regular at  both $a_w$ and $b_w$.
If $w=\tilde w1$, then by the construction of ${\mathcal C},$ $a_w$ is the biggest zero of $\widetilde P_{(k+1)K}$ in $I_{\tilde w}$. Moreover  by induction assumption, $(\widetilde P_{(k+1)K-1},\widetilde P_{(k+1)K})$ is $(\delta_2,4)$-regular at $b_{\tilde w}=b_w$.
Again by applying  Lemma \ref{iterband},  the desired  result holds.

By induction,  the claim is proven.

 Now it is  well known  that (see for example \cite{F})
$$
\dim_H \mathcal C\ge \frac{\ln 2}{-\ln 2.1^{-K}}=\frac{\ln2}{K\ln 2.1}.
$$
\end{proof}

 \begin{proof}[Proof of Theorem \ref{main-bd}]
Recall that  $\Sigma$ is defined by \eqref{Sigma}.
By definition, for any $w\in\{0,1\}^*$, $a_w,b_w\in\Sigma\subset \sigma(H_{\lambda,w}).$
Since $\mathcal C=\overline{\{a_w,b_w: w\in \{0,1\}^* \}},$  we conclude that $\mathcal C \subset \sigma(H_{\lambda,w})$.
Consequently
 $$
 \dim_H\sigma(H_{\lambda,w}) \ge \dim_H \mathcal C \ge \frac{\ln 2}{K\ln 2.1}.
 $$
 Recall that  $K=140$ is an absolute positive constant, the result follows.
 \end{proof}


 \section{Proof of Lemma \ref{initial-condition} and \ref{basic}}\label{sec-3}

 In this section, at first we will show how a germ can appear. Next we show that $(h_4,h_5)$ has a $(1,1)$-regular germ at $a_\emptyset$ (Lemma \ref{initial-condition}).
 Then we will show that when iterate the polynomial pairs, the regularity will become better and better (Lemma \ref{basic}).

 \subsection{Generating  a  germ}\

 At first we present  a sufficient condition on how we can produce a germ when a  family of polynomials satisfies the recurrence relation \eqref{recurrence}.

 Given polynomial pair $(f_0,f_1)$. Define $f_{n+1}=f_{n-1}^2(f_n-2)+2$ for $n\ge 1.$

 \begin{lem} \label{suff-condi}
  Assume $f_0(x_0)=0$. If $f_1(x_0)< 2$ and $f_0^\prime(x_0), f_1(x_0)\ne 0$, then $(f_{k-1},f_k)$ has a germ at $x_0$ for $k\ge 4.$
 \end{lem}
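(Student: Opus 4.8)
The plan is to track the low-order behavior of the polynomials $f_n$ near $x_0$ using the recurrence $f_{n+1} = f_{n-1}^2(f_n-2)+2$, and to show that the pair $(f_3,f_4)$ already exhibits the germ configuration; since $(f_3,f_4)$ then satisfies the same hypotheses one would want (a zero of $f_3$ at $x_0$ with $f_4(x_0)<2$, etc.), the germ property for $(f_{k-1},f_k)$, $k\ge 4$, follows by the same local computation applied repeatedly. Concretely, I would set $t = x - x_0$ and write, for each relevant $n$, $f_n(x) = c_n + d_n t + O(t^2)$ (or $f_n(x) = 2 + e_n t^2 + O(t^3)$ once we know $f_n(x_0)=2$ and $f_n'(x_0)=0$), and just push these Taylor data through the recurrence.

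First I would compute $f_2, f_3, f_4$ at $x_0$ from the recurrence. We are given $f_0(x_0)=0$, so $f_2(x_0) = f_0(x_0)^2(f_1(x_0)-2)+2 = 2$. Next, $f_3(x_0) = f_1(x_0)^2(f_2(x_0)-2)+2 = 2$, and $f_4(x_0) = f_2(x_0)^2(f_3(x_0)-2)+2 = 2$. So all of $f_2(x_0), f_3(x_0), f_4(x_0)$ equal $2$. Then I would differentiate: since $f_0(x_0)=0$, $f_2'(x_0) = 2f_0(x_0)f_0'(x_0)(f_1(x_0)-2) + f_0(x_0)^2 f_1'(x_0) = 0$; similarly $f_3'(x_0) = 2f_1(x_0)f_1'(x_0)(f_2(x_0)-2) + f_1(x_0)^2 f_2'(x_0) = 0$ using $f_2(x_0)=2$ and $f_2'(x_0)=0$; and likewise $f_4'(x_0)=0$ using $f_3(x_0)=2$, $f_3'(x_0)=0$. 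So $f_2,f_3,f_4$ each have $x_0$ as a critical point with value $2$, and their leading behavior is quadratic: $f_n(x) = 2 + e_n t^2 + O(t^3)$ for $n=2,3,4$.

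Now I would identify the quadratic coefficients and check they have the right sign and the $4:1$ ratio between consecutive ones needed for a germ. For $f_2$: the lowest-order term of $f_0^2(f_1-2)$ is $(f_0'(x_0)t)^2(f_1(x_0)-2) = f_0'(x_0)^2(f_1(x_0)-2)\,t^2$, so $e_2 = f_0'(x_0)^2(f_1(x_0)-2) < 0$ by the hypotheses $f_0'(x_0)\ne0$ and $f_1(x_0)<2$; write $e_2 = -\rho^2$ with $\rho = |f_0'(x_0)|\sqrt{2-f_1(x_0)} > 0$. Then for $f_3 = f_1^2(f_2-2)+2$: the lowest-order term is $f_1(x_0)^2(f_2(x)-2) = f_1(x_0)^2 e_2 t^2 + O(t^3)$, so $e_3 = f_1(x_0)^2 e_2 = -f_1(x_0)^2\rho^2$; this is nonzero precisely because $f_1(x_0)\ne 0$. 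And for $f_4 = f_2^2(f_3-2)+2$: since $f_2(x_0)=2$, the lowest-order term is $2^2(f_3(x)-2) = 4 e_3 t^2 + O(t^3)$, giving $e_4 = 4e_3 = -4 f_1(x_0)^2\rho^2$. Thus $e_4 = 4 e_3$ exactly, both negative, which is the defining relation $P_0(x) = 2 - \rho'^2 t^2 + O(t^3)$, $P_{-1}(x) = 2 - \tfrac{\rho'^2}{4} t^2 + O(t^3)$ with $\rho' = 2|f_1(x_0)|\rho$: the pair $(f_3, f_4)$ has a $\rho'$-germ at $x_0$. Finally, to get the statement for all $k\ge 4$ I would note that passing from the pair $(f_{n-1},f_n)$ with a germ to $(f_n,f_{n+1})$ the same computation (using $f_n(x_0)=2$, $f_n'(x_0)=0$, leading coefficient $-\tilde\rho^2$ and $f_{n-1}(x_0)=2$) multiplies the scaling factor by $2$ and preserves the quadratic form — this is exactly the content of Lemma \ref{basic} part 1) applied to $(P_{-1},P_0) = (f_3,f_4)$, so I would simply invoke it rather than redo the induction.

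I do not expect any serious obstacle here; the only thing requiring a little care is bookkeeping of which coefficients are guaranteed nonzero — the role of the hypotheses $f_0'(x_0)\ne 0$ (gives $\rho>0$, i.e. a genuine quadratic, not a higher-order zero) and $f_1(x_0)\ne 0$ (keeps $e_3$, hence $e_4$, nonzero) — and the sign input $f_1(x_0)<2$ (makes $e_2<0$, so that the germ opens downward as in the definition). Everything else is a routine Taylor expansion through a polynomial recurrence, and the reduction to $k\ge 4$ via the value-$2$, critical-point structure at each stage is immediate once the base case $(f_3,f_4)$ is in hand.
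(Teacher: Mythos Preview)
Your proof is correct and follows essentially the same approach as the paper: both push the low-order Taylor data of $f_n$ at $x_0$ through the recurrence to identify the negative quadratic coefficient and the $4{:}1$ ratio between consecutive terms. The only cosmetic difference is that the paper records the closed formula $f_k(x)=2-4^{k-3}(2-f_1(x_0))(f_0'(x_0)f_1(x_0))^2(x-x_0)^2+O((x-x_0)^3)$ for all $k\ge3$ in one line, whereas you establish the base case $(f_3,f_4)$ and then appeal to Lemma~\ref{basic}~1) for the induction; since that lemma is proven independently (via \eqref{expansion-p-k}) there is no circularity.
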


 \proof\
 Write
 $$
 f_0(x)=f^\prime(x_0)(x-x_0)+O((x-x_0)^2)\ \text{ and }\ f_1(x)=f_1(x_0)+O((x-x_0)).
 $$
 By the recurrence relation we have
 \begin{eqnarray*}
 f_2(x)&=&2-(2-f_1(x_0))f_0^{\prime2}(x_0)(x-x_0)^2 +O((x-x_0)^3)\\
  f_k(x)&=&2-4^{k-3}(2-f_1(x_0))\left(f_0^{\prime}(x_0)f_1(x_0)\right)^2(x-x_0)^2 \\
  && +O((x-x_0)^3)\ \ \ \ (k\ge 3).
 \end{eqnarray*}

 If  $f_1(x_0)< 2$ and $f_0^\prime(x_0), f_1(x_0)\ne 0$, then
 $$
 \rho:= \sqrt{2-f_1(x_0)}|f_0^{\prime}(x_0)f_1(x_0)|>0
 $$ and   for $k\ge 3$
 \begin{equation}\label{fkx}
f_k(x)=2-(2^{k-3}\rho)^2(x-x_0)^2+O((x-x_0)^3).
\end{equation}
Then by the definition, $(f_{k-1},f_k)$ has a $2^{k-3}\rho$-germ at $x_0$ for all $k\ge 4.$
\hfill $\Box$

 Now we prove Lemma \ref{initial-condition}.

 \noindent {\bf  Proof of Lemma \ref{initial-condition}}\
By \eqref{h12},\eqref{recurrence} and \eqref{start},
$$h_1(a_\emptyset)=0,\ \
h_2(a_\emptyset)=-2-4\lambda^2<2, \ \  h_1^\prime(a_\emptyset)=2a_\emptyset \ne0,
\ \ h_2(a_\emptyset)\ne0.
$$
Write $\tau:=2^3(1+2\lambda^2)\sqrt{(1+\lambda^2)(2+\lambda^2)}$. By Lemma \ref{suff-condi},  $(h_4,h_5)$ has a $2\tau$-germ at $a_\emptyset$.

In the following we show that this germ is $(1,1)$-regular. Write $t:=2a_\emptyset$. Then $t\ge 2\sqrt{2}.$ Define
$$
g_n(x)=h_n(x+a_\emptyset).
$$
Then by direct computation,
$$
g_1(x)=t x+x^2\ \ \ \ \text{ and }\ \ \ \  g_2(x)=(6-t^2)+t^2 x^2+2t x^3+ x^4.
$$
Then we can compute that for $n\ge 4$,
$$
g_n(x)=2-4^{n-4}t^2(t^2-6)^2(t^2-4) x^2+O(x^3).
$$
By computation, $\tau= t(t^2-6)\sqrt{t^2-4}$. Define $f_n(x)=g_n(x/\tau)$. Then for $n\ge 4$ we have
$$
f_n(x)=2-4^{n-4}x^2+O(x^3).
$$
We also have
\begin{eqnarray*}
f_1(x)&=&{t}{\tau}^{-1}x+{\tau^{-2}}x^2\\
f_2(x)&=&(6-t^2)+(t/\tau)^2x^2+(2t/\tau^3)x^3+x^4/\tau^4\\
f_3(x)&=&2+O(x^2).
\end{eqnarray*}
By the fact that $t\ge 2\sqrt{2}$ and $\tau=t(t^2-6)\sqrt{t^2-4}$, it is direct to verify that
$$|f_1(x)|^*\preceq\frac{t}{\tau}xe^{x/32},\quad
|f_2(x)|^*\preceq(t^2-6)e^{x/4},\quad |f_2(x)-2|^*\preceq (t^2-4)e^{x/4}.
$$
Then, by $f_n=2+f_{n-2}^2(f_{n-1}-2)$, we have
\begin{equation}\label{f3x}
|f_3(x)|^*\preceq 2+(|f_1(x)|^{*})^2\cdot |f_2(x)-2|^\ast\preceq 2+\frac{1}{(t^2-6)^2}x^2e^{5x/16}.
\end{equation}
Since $f_3(x)=2+O(x^2)$, by \eqref{f3x} we have $|f_3(x)-2|^\ast\preceq {(t^2-6)}^{-2}x^2e^{5x/16}.$ Consequently
\begin{equation}\label{f4x}
|f_4(x)|^*\preceq 2+(|f_2(x)|^{*})^2\cdot |f_3(x)-2|^\ast\preceq 2+x^2e^{13x/16}.
\end{equation}
Since $f_4(x)=2-x^2+O(x^3)$, by \eqref{f4x} we have
$$
|f_4(x)-2+x^2|^\ast\preceq  \sum_{n\ge 3}\frac{x^n}{(n-2)!}.
$$
We also have
$$
|2\cos x-2+x^2|^\ast\preceq \sum_{n\ge 4}\frac{x^n}{n!}.
$$
Thus  we conclude that
\begin{equation}\label{1-1reg-4}
|f_4(x)-2\cos x|^*\preceq \sum_{n\ge3}x^n.
\end{equation}
Similarly since $f_4(x)=2+O(x^2)$, by \eqref{f4x} we have $|f_4(x)-2|^\ast\preceq x^2e^{13x/16}.$ Consequently
\begin{eqnarray*}
 |f_5(x)|^*&\preceq& 2+(|f_3(x)|^{*})^2\cdot |f_4(x)-2|^\ast\\
&\preceq& 2+4x^2e^{13x/16}+\frac{4x^4e^{18x/16}}{(t^2-6)^2}+\frac{x^6e^{23x/16}}{(t^2-6)^4}.
\end{eqnarray*}
Then we have
$$|f_5(x/2)|^\ast \preceq 2+x^2 e^{x/2}+\frac{x^4e^x}{16}+\frac{x^6e^x}{2^{10}}.$$
Since $f_5(x/2)=2-x^2+O(x^3)$ and $2\cos x=2-x^2+O(x^4),$
by similar argument as \eqref{1-1reg-4}, we can  show that
\begin{equation}\label{1-1reg-5}
|f_5(x/2)-2\cos x|^*\preceq \sum_{n\ge3}x^n.
\end{equation}
\eqref{1-1reg-4} and \eqref{1-1reg-5} implies that $(h_4,h_5)$ is $(1,1)$-regular at $a_\emptyset$
with scaling  factor $2\tau$.
\hfill $\Box$

 \subsection{Iteration of regular polynomial pairs}\

 In this subsection we will prove a strengthen  version of Lemma \ref{basic}, which will be needed in the proof of Lemma \ref{iterband}.

 Let us recall the setting in subsection \ref{sec-21}.
 Assume $(P_{-1}, P_0)$ has a $\rho$-germ at $x_0$.
For $k\ge1$, define $P_k$ by the recurrence relation \eqref{iter}.
Then it is direct to check that
\begin{equation}\label{expansion-p-k}
P_k(x)=2-4^k\rho^2(x-x_0)^2+O((x-x_0)^3), \ \  \ (\forall k\ge 1).
\end{equation}
For $k\ge -1$ define
\begin{equation}\label{iterscale}
Q_k(x)=P_k(\frac{x}{2^k\rho}+x_0).
\end{equation}
It is ready to show that
 $
 Q_{k}(x)=2-x^2+O(x^3).
 $
Since $2\cos x=2-x^2+O(x^3)$,  we conclude that  $Q_k(x)=2\cos x +O(x^3).$
Write $\Delta_k(x)= Q_k(x)-2\cos x,$ then
\begin{equation}\label{delta-k}
\Delta_k(x)=Q_k(x)-2\cos x=\sum_{k\ge 3} \Delta_{k,n}x^n.
\end{equation}
By the recurrence relation \eqref{iter}, we have for $k\ge 1$
\begin{eqnarray*}
Q_k(x)
&=&Q_{k-2}^2(x/4)(Q_{k-1}(x/2)-2)+2\\
&=&\Big(2\cos x/4+\Delta_{k-2}(x/4)\Big)^2\Big(2\cos x/2-2+\Delta_{k-1}(x/2)\Big)+2\\
&=&2\cos x+(2+2\cos \frac{x}{2})\cdot\Delta_{k-1}(\frac{x}{2})+
\\&&\Delta_{k-2}(\frac{x}{4})\Big(4\cos \frac{x}{4}+\Delta_{k-2}
(\frac{x}{4})\Big)\Big(2\cos \frac{x}{2}-2+\Delta_{k-1}(\frac{x}{2})\Big).
\end{eqnarray*}
Thus we conclude that for $k\ge 1$
\begin{eqnarray}\label{delta-k-x}
\Delta_k(x)&=&(2+2\cos \frac{x}{2})\cdot\Delta_{k-1}(\frac{x}{2})+\\
\nonumber&&\Delta_{k-2}(\frac{x}{4})\Big(4\cos \frac{x}{4}+\Delta_{k-2}
(\frac{x}{4})\Big)\Big(2\cos \frac{x}{2}-2+\Delta_{k-1}(\frac{x}{2})\Big).
\end{eqnarray}

The following proposition shows that how the coefficients evolves.

\begin{prop}\label{model}
Assume $\Phi_0,\Phi_1,\Phi_2$ are real analytic functions with  Taylor expension $\Phi_k(x)=\sum_{n\ge 3} \Phi_{k,n}x^n, \ k=0,1,2 $
 and satisfy the following relation:
\begin{eqnarray*}
\Phi_2(x)&=&(2+2\cos \frac{x}{2})\cdot\Phi_{1}(\frac{x}{2})+\\
\nonumber&&\Phi_{0}(\frac{x}{4})\Big(4\cos \frac{x}{4}+\Phi_{0}
(\frac{x}{4})\Big)\Big(2\cos \frac{x}{2}-2+\Phi_{1}(\frac{x}{2})\Big).
\end{eqnarray*}
If there exist $0<\delta\le1$ and $\beta\ge1$ such that
$$|\Phi_{0,n}|,  |\Phi_{1,n}|\le \delta \beta^{-n},\quad \forall n\ge3,$$
then when  $\beta=1$,
\begin{equation}\label{beta=1}
|\Phi_2(x)|^*\preceq
\delta\left(4\frac{x^3}{2^3}+4\frac{x^4}{2^4}+
9\sum\limits_{n\ge5}\frac{x^n}{2^n}\right).
\end{equation}
when  $\beta=2$,
\begin{equation}\label{beta=2}
|\Phi_2(x)|^*\preceq
\delta\left(4\frac{x^3}{4^3}+4\frac{x^4}{4^4}+24\frac{x^5}{4^5}+
43\sum\limits_{n\ge6}\frac{x^n}{4^n}\right).
\end{equation}
\end{prop}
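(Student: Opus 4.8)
The plan is to work entirely at the level of the partial order $\preceq$ on formal power series, using the multiplicative and additive compatibility of $|\cdot|^\ast$ with $\preceq$ recalled just before Lemma \ref{initial-condition}. The first step is to record clean majorants for the elementary pieces appearing on the right-hand side of the recurrence. Since $2\cos x = \sum_{n\ge 0}(-1)^n x^{2n}/(2n)!$, we have $|2+2\cos(x/2)|^\ast \preceq 4 e^{x/2}$, $|4\cos(x/4)|^\ast \preceq 4 e^{x/4}$, and, crucially, $|2\cos(x/2)-2|^\ast \preceq \tfrac{x^2}{4}e^{x/2}$ because the constant and linear terms cancel. For the $\Phi_k$ themselves, the hypothesis $|\Phi_{k,n}|\le \delta\beta^{-n}$ for $n\ge 3$ gives $|\Phi_k(x)|^\ast \preceq \delta\sum_{n\ge3}(x/\beta)^n = \delta \frac{(x/\beta)^3}{1-x/\beta}$; I will also use the cruder $|\Phi_k(x/2)|^\ast \preceq \delta\sum_{n\ge3}(x/(2\beta))^n$ and similarly with $x/4$, so that every term carries an explicit factor $x^3$ or higher.

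The second step is to substitute these majorants into the displayed relation for $\Phi_2$, term by term. Writing $A := |2+2\cos(x/2)|^\ast\cdot|\Phi_1(x/2)|^\ast$ and $B := |\Phi_0(x/4)|^\ast\big(|4\cos(x/4)|^\ast + |\Phi_0(x/4)|^\ast\big)\big(|2\cos(x/2)-2|^\ast + |\Phi_1(x/2)|^\ast\big)$, the triangle-type inequalities for $|\cdot|^\ast$ give $|\Phi_2(x)|^\ast \preceq A + B$. Each of $A$ and $B$ is now a product of explicit power series with nonnegative coefficients, so I can expand and compare coefficients of $x^n$ directly. For $\beta=1$: the leading term of $A$ is $4\delta\frac{x^3}{2^3}$ (from $2\cdot \Phi_{1,3}(x/2)^3$, i.e. the constant term $4$ of $2+2\cos(x/2)$ times $\delta x^3/8$), and $B$ contributes only from order $x^5$ upward because $|\Phi_0(x/4)|^\ast$ starts at $x^3$ and $|2\cos(x/2)-2|^\ast+|\Phi_1(x/2)|^\ast$ starts at $x^2$; bounding the tails geometrically by $\sum_{n\ge5}x^n/2^n$ after checking the small-order coefficients explicitly yields the constant $9$ in \eqref{beta=1}. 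For $\beta=2$ the same bookkeeping, now with the extra decay $2^{-n}$ in the $\Phi_k$, pushes the first nontrivial contribution of $B$ to order $x^5$ with coefficient $\le 24\cdot 4^{-5}$ coming from the product of the $x^3$ term of $\Phi_0$ and the $x^2$ term of $2\cos(x/2)-2$, and the tail from order $x^6$ on is absorbed into $43\sum_{n\ge6}x^n/4^n$.

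The only real work is the second step: verifying the low-order coefficients ($x^3,x^4$ and, for $\beta=2$, $x^5$) by hand and then choosing a single geometric majorant $\,c\sum_{n\ge n_0}x^n/\beta'^n$ that dominates the genuine tail of $A+B$ — this is where the specific constants $9$, $24$, $43$ come from, and where one must be careful that the ratio in the geometric tail ($1/2$ when $\beta=1$, $1/4$ when $\beta=2$) really does dominate, i.e. that no cross term decays more slowly. Everything else (the majorants in step one, the assembly $|\Phi_2|^\ast\preceq A+B$) is mechanical. I expect the coefficient-chasing for the $\beta=2$ case to be the most delicate point, since there the claimed tail constant $43$ is not much larger than the individual contributions and leaves little slack, so the geometric estimate of the tail has to be done with some care rather than by a wasteful bound.
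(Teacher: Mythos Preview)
Your decomposition $A+B$ and the overall strategy coincide with the paper's proof (their $(I)$ and $\Phi_0(x/4)\cdot(II)\cdot(III)$). The issue is that the specific majorant you record in step one, $|2+2\cos(x/2)|^\ast \preceq 4e^{x/2}$, is too loose to deliver the stated constants. The exponential carries a spurious linear term $2x$ (the true $|2+2\cos(x/2)|^\ast = 2+2\cosh(x/2)$ has none), and this propagates: the $x^4$ coefficient of $A$ then comes out as $(4+4\beta)\delta/(2\beta)^4$ instead of $4\delta/(2\beta)^4$, already violating the claimed $4\,x^4/(2\beta)^4$ in both cases; worse, for $\beta=1$ the asymptotic tail coefficient of $A$ alone is $4\sum_{k\ge0}1/k! = 4e\approx 10.87$, which exceeds the target $9$ before $B$ is even added. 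So the plan as written cannot close.

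The paper fixes exactly this by using the sharper $|\cos x|^\ast \preceq 1+\sum_{n\ge2}x^n/n!$ (no linear term), giving $|2+2\cos(x/2)|^\ast\preceq 4 + 2\sum_{n\ge2}x^n/(2^n n!)$, and then systematically collapsing every product back to a single geometric series in $x/(2\beta)$ via the two identities
\[
\Big(\sum_{n\ge2}\tfrac{x^n}{2^n n!}\Big)\Big(\sum_{n\ge3}\tfrac{x^n}{(2\beta)^n}\Big)\preceq (e^\beta-1-\beta)\sum_{n\ge5}\tfrac{x^n}{(2\beta)^n},
\qquad
\Big(\sum_{n\ge3}\tfrac{x^n}{(4\beta)^n}\Big)\Big(\sum_{n\ge3}\tfrac{x^n}{(2\beta)^n}\Big)\preceq \tfrac14\sum_{n\ge6}\tfrac{x^n}{(2\beta)^n}.
\]
This produces a single closed-form bound with $\beta$-dependent constants $(e^\beta-\beta)(2+\beta^2/2)+2$ at $x^5$ and $(e^\beta-\beta)^2+\tfrac{33}{16}(e^\beta-\beta)+2$ for $n\ge6$, which evaluate to about $6.3$ and $8.5$ at $\beta=1$ and $23.6$ and $42.2$ at $\beta=2$, just under $9$, $24$, $43$. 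If you replace your exponential majorants by these cosh-based ones (equivalently: exploit that $\cos$ contributes no odd terms) and use the two displayed product identities, your outline goes through and in fact reproduces the paper's argument.
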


\begin{proof}
 Write
$$
\begin{cases}
(I)&=(2+2\cos \frac{x}{2})\cdot\Phi_1(\frac{x}{2}),\\
(II)&=4\cos \frac{x}{4}+\Phi_0(\frac{x}{4}),\\
(III)&=2\cos \frac{x}{2}-2+\Phi_1(\frac{x}{2}).
\end{cases}
$$
Then
\begin{equation}\label{sum}
\Phi_2(x)=(I)+\Phi_{0}(\frac{x}{4})(II)(III),
 \end{equation}

We will frequently use the following facts:
\begin{equation}\label{use}
|\cos x|^*\preceq 1+\sum_{n\ge2}\frac{x^n}{n!},\quad
\sum_{k=2}^{n}\frac{\beta^k}{k!}<e^\beta-1-\beta,\quad
\sum_{k=3}^{n}2^{-k}<\frac{1}{4}.
\end{equation}
For example by the last two inequalities of \eqref{use} we have
\begin{equation}\label{two-useful}
\begin{array}{l}
\left(\sum\limits_{n\ge2}\frac{x^n}{n!2^n}\right)
\left(\sum\limits_{n\ge3}\frac{x^n}{(2\beta)^n}\right)=
\sum\limits_{n\ge5}\frac{x^n}{(2\beta)^n}\sum\limits_{k=2}^{n-3}\frac{\beta^k}{k!}\preceq
(e^\beta-1-\beta)\sum\limits_{n\ge5}\frac{x^n}{(2\beta)^n}\\
\left(\sum\limits_{n\ge3}\frac{x^n}{(4\beta)^n}\right)
\left(\sum\limits_{n\ge3}\frac{x^n}{(2\beta)^n}\right)=
\sum\limits_{n\ge6}\frac{x^n}{(2\beta)^n}\sum\limits_{k=3}^{n-3}2^{-k}\preceq
\frac{1}{4}\sum\limits_{n\ge6}\frac{x^n}{(2\beta)^n}
\end{array}
\end{equation}

By the assumption, \eqref{use} and  \eqref{two-useful}, we have
\begin{equation}\label{I}
\begin{array}{rcl}
|(I)|^*&\preceq&\left(4+2\sum\limits_{n\ge2}\frac{x^n}{n!2^n}\right)
\sum\limits_{n\ge3}\delta\frac{x^n}{(2\beta)^n}\\
&\preceq&4\delta\sum\limits_{n\ge3}\frac{x^n}{(2\beta)^n}+
2(e^\beta-1-\beta)\delta\sum\limits_{n\ge5}\frac{x^n}{(2\beta)^n}\\
&\preceq& 4\delta\sum_{n=3}^4\frac{x^n}{(2\beta)^n}+
2(e^\beta+1-\beta)\delta\sum\limits_{n\ge5}\frac{x^n}{(2\beta)^n}.
\end{array}\end{equation}

By $|(II)|^*\preceq4+4\sum\limits_{n\ge2}\frac{x^n}{n!4^n}+|\Phi_0(\frac{x}{4})|^*$
and $|\Phi_0(\frac{x}{4})|^*\preceq\delta\sum\limits_{n\ge3}\frac{x^n}{(4\beta)^n}$ we get
$$
\begin{array}{rcl}
|\Phi_0(\frac{x}{4})\times (II)|^*
&\preceq&4\delta\sum\limits_{n\ge3}\frac{x^n}{(4\beta)^n}+
4\delta\sum\limits_{n\ge5}\frac{x^n}{(4\beta)^n}\sum\limits_{k=2}^{n-3}\frac{\beta^k}{k!}+
(|\Phi_0(\frac{x}{4})|^*)^2\\
&\preceq&4\delta\sum\limits_{n\ge3}\frac{x^n}{(4\beta)^n}+
4(e^\beta-1-\beta)\delta\sum\limits_{n\ge5}\frac{x^n}{(4\beta)^n}+
(|\Phi_0(\frac{x}{4})|^*)^2\\
&\preceq&
4(e^\beta-\beta)\delta\sum\limits_{n\ge3}\frac{x^n}{(4\beta)^n}+
(|\Phi_0(\frac{x}{4})|^*)^2.
\end{array}$$
Since $|(III)|^*\preceq
2\sum\limits_{n\ge2}\frac{x^n}{2^n n!}+\delta\sum\limits_{n\ge3}\frac{x^n}{(2\beta)^n}$, by \eqref{two-useful} we have
\begin{eqnarray*}\allowdisplaybreaks
&&|\Phi_0(\frac{x}{4})\times (II)\times (III)|^*\\
&\preceq &
8(e^\beta-\beta)\delta\left(\sum_{n\ge3}\frac{x^n}{(4\beta)^n}\right)
\left(\sum_{n\ge2}\frac{x^n}{2^n n!}\right)\\
&&+4(e^\beta-\beta)\delta^2\left(\sum_{n\ge3}\frac{x^n}{(4\beta)^n}\right)
\left(\sum_{n\ge3}\frac{x^n}{(2\beta)^n}\right)\\
&&+2\delta^2\left(\sum_{n\ge3}\frac{x^n}{(4\beta)^n}\right)
\left(\sum_{n\ge3}\frac{x^n}{(4\beta)^n}\right)
\left(\sum_{n\ge2}\frac{x^n}{2^n n!}\right)\\
&&+\delta^3\left(\sum_{n\ge3}\frac{x^n}{(4\beta)^n}\right)
\left(\sum_{n\ge3}\frac{x^n}{(4\beta)^n}\right)
\left(\sum_{n\ge3}\frac{x^n}{(2\beta)^n}\right)\\
&\preceq &
(e^\beta-\beta)\delta\left(\sum_{n\ge3}\frac{x^n}{(2\beta)^n}\right)
\left(\sum_{n\ge2}\frac{x^n}{2^n n!}\right)\\
&&+4(e^\beta-\beta)\delta^2\left(\sum_{n\ge3}\frac{x^n}{(4\beta)^n}\right)
\left(\sum_{n\ge3}\frac{x^n}{(2\beta)^n}\right)\\
&&+2^{-2}\delta^2\left(\sum_{n\ge3}\frac{x^n}{(4\beta)^n}\right)
\left(\sum_{n\ge3}\frac{x^n}{(2\beta)^n}\right)
\left(\sum_{n\ge2}\frac{x^n}{2^n n!}\right)\\
&&+\delta^3\left(\sum_{n\ge3}\frac{x^n}{(4\beta)^n}\right)
\left(\sum_{n\ge3}\frac{x^n}{(4\beta)^n}\right)
\left(\sum_{n\ge3}\frac{x^n}{(2\beta)^n}\right)\\
&\preceq&
(e^\beta-\beta)\delta\sum\limits_{n\ge5}\frac{x^n}{(2\beta)^n}
\sum\limits_{k=2}^{n-3}\frac{\beta^k}{k!} \\
&&+(e^\beta-\beta)\delta^2\sum\limits_{n\ge6}\frac{x^n}{(2\beta)^n}\\
&&+2^{-2}(e^\beta-1-\beta)\delta^2\left(\sum_{n\ge3}\frac{x^n}{(4\beta)^n}\right)
\left(\sum\limits_{n\ge5}\frac{x^n}{(2\beta)^n}\right)\\
&&+\delta^3\left(\sum_{n\ge3}\frac{x^n}{(4\beta)^n}\right)
\left(\frac{1}{4}\sum\limits_{n\ge6}\frac{x^n}{(2\beta)^n}\right)\\
&\preceq&\frac{\beta^2(e^\beta-\beta)}{2}\delta\frac{x^5}{(2\beta)^5}+
(e^\beta-\beta)(e^\beta-1-\beta)\delta\sum\limits_{n\ge6}\frac{x^n}{(2\beta)^n}+\\
&&(e^\beta-\beta)\delta^2\sum\limits_{n\ge6}\frac{x^n}{(2\beta)^n}+
\frac{e^\beta-1-\beta}{16}\delta^2\sum\limits_{n\ge8}\frac{x^n}{(2\beta)^n}+
\frac{1}{16}\delta^3\sum\limits_{n\ge9}\frac{x^n}{(2\beta)^n},
\end{eqnarray*}
where, to get the first term of the last inequality, we notice that $\sum_{k=2}^{n-3}\frac{\beta^k}{k!}=\beta^2/2$ for $n=5$. 
Together with \eqref{I} and \eqref{sum} and $0<\delta\le1$, we get
$$\begin{array}{rcl}
|\Phi_2(x)|^*&\preceq&
4\delta\sum_{n=3}^4\frac{x^n}{(2\beta)^n}+
\left[(e^\beta-\beta)(2+\frac{\beta^2}{2})+2\right]\delta\frac{x^5}{(2\beta)^5}\\
&&+\left[(e^\beta-\beta)^2+\frac{33}{16}(e^\beta-\beta)+2\right]\delta\sum_{n\ge6}\frac{x^n}{(2\beta)^n}.
\end{array}
$$

By taking $\beta=1$ and $\beta=2$ respectively, we prove the proposition.
\end{proof}

 Now we show a strengthen version of Lemma \ref{basic}.

 \begin{lem}\label{control-coefficient}
Let $(P_k)_{k\ge-1}$ satisfy recurrence relation \eqref{iter} and $\delta\le1$. Assume $(P_{-1}, P_0)$ has a $\rho$-germ at $x_0$. Then

1) $(P_{k-1}, P_k)$ has a $2^k\rho$-germ at $x_0$ for any $k\ge 1$.

2) If $(P_{-1},P_0)$ is $(\delta,1)$-regular at $x_0$,
then $(P_{k-1},P_{k})$ is $(2\cdot {2}^{-k/2}\delta,1)$-regular at $x_0$ for any $k\ge1$,  is    $(36\cdot{2}^{-k/2}\delta,2)$-regular at $x_0$ for any $k\ge 2$ and  is  $(3200\cdot{2}^{-k/2}\delta,4)$-regular at $x_0$ for any $k\ge4$.

3) If $(P_{-1},P_0)$ is $(\delta,2)$-regular at $x_0$,
then for any $k\ge0$, $(P_{k-1},P_{k})$ is $(\delta,2)$-regular.
\end{lem}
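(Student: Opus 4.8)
The plan is to deduce everything from the error recursion \eqref{delta-k-x} together with Proposition \ref{model}. Part 1) is immediate: from the recurrence \eqref{iter} an easy induction gives the expansion \eqref{expansion-p-k}, $P_k(x)=2-4^k\rho^2(x-x_0)^2+O((x-x_0)^3)$, so comparing the expansions of $P_{k-1}$ and $P_k$ with the definition of a germ shows that $(P_{k-1},P_k)$ has a $2^k\rho$-germ at $x_0$ for all $k\ge1$. For 2) and 3) the content is a bookkeeping exercise on $\Delta_k(x)=Q_k(x)-2\cos x$: by \eqref{delta-k-x} the triple $(\Delta_{k-2},\Delta_{k-1},\Delta_k)$ satisfies exactly the relation of Proposition \ref{model}, so one only has to run the coefficient bounds through \eqref{beta=1} (when the two inputs are $1$-regular) and \eqref{beta=2} (when they are $2$-regular) and read off the output.

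I would do 3) first, as it is the cleanest. Induct on $k\ge0$, the base $k=0$ being the hypothesis. If $\Delta_{k-2},\Delta_{k-1}$ are $(\delta,2)$-regular, i.e.\ $|\Delta_{k-2,n}|,|\Delta_{k-1,n}|\le\delta 2^{-n}$, then \eqref{beta=2} gives $|\Delta_k|^\ast\preceq\delta\big(4\,x^3/4^3+4\,x^4/4^4+24\,x^5/4^5+43\sum_{n\ge6}x^n/4^n\big)$, and the numerical checks $4\cdot4^{-3}\le2^{-3}$, $4\cdot4^{-4}\le2^{-4}$, $24\cdot4^{-5}\le2^{-5}$ and $43\le2^n$ for $n\ge6$ show the right-hand side is $\preceq\delta\sum_{n\ge3}x^n/2^n$; hence $\Delta_k$ is $(\delta,2)$-regular, and together with $\Delta_{k-1}$ this certifies the pair, closing the induction.

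For 2) I would climb the three levels $\beta=1\to2\to4$ in turn. Writing $e_k$ for the least constant with $|\Delta_k|^\ast\preceq e_k\sum_{n\ge3}x^n$, the largest coefficient on the right of \eqref{beta=1} is $4\cdot2^{-3}=\tfrac12$, so \eqref{beta=1} yields $e_k\le\tfrac12\max(e_{k-2},e_{k-1})$; with $e_{-1}=e_0=\delta$ a strong induction gives $e_k\le\delta 2^{-\lceil k/2\rceil}$, and since $e_{k-1}\le\sqrt2\,\delta 2^{-k/2}\le2\cdot2^{-k/2}\delta$ the pair $(P_{k-1},P_k)$ is $(2\cdot2^{-k/2}\delta,1)$-regular for $k\ge1$. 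Next, reading the same output \eqref{beta=1} as a $\beta=2$ bound costs only the largest numerator $9$, so $|\Delta_k|^\ast\preceq9\max(e_{k-2},e_{k-1})\sum_{n\ge3}x^n/2^n$, and $\max(e_{k-2},e_{k-1})\le2\delta 2^{-k/2}$ makes $\Delta_k$ be $(18\cdot2^{-k/2}\delta,2)$-regular for $k\ge1$; pairing with $\Delta_{k-1}$ costs the half-step factor $\sqrt2$, and since $18\sqrt2<36$ the pair is $(36\cdot2^{-k/2}\delta,2)$-regular for $k\ge2$. Finally, feeding these $\beta=2$ bounds through \eqref{beta=2} and reading the output as a $\beta=4$ bound costs the largest numerator $43$, so $\Delta_k$ is $(43\cdot36\cdot2^{-k/2}\delta,4)$-regular for $k\ge3$; pairing with $\Delta_{k-1}$ again costs $\sqrt2$, and since $1548\sqrt2<3200$ the pair is $(3200\cdot2^{-k/2}\delta,4)$-regular for $k\ge4$.

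The only genuine subtlety, and the place where care is needed, is the interaction between the index shift and the change of $\beta$. The reason the scheme works is that one application of Proposition \ref{model} with parameter $\beta$ already produces output whose coefficients carry the factor $(2\beta)^{-n}$ with \emph{bounded} numerators, hence is automatically $2\beta$-regular up to an absolute constant --- this is how one climbs $1\to2\to4$. At the same time the object to be controlled is the pair $(P_{k-1},P_k)$, so $\Delta_{k-1}$ and $\Delta_k$ must be certified with the same $(\delta,\beta)$, which always forces the extra $2^{1/2}$ from the half-step between consecutive indices; this, together with the numerators $9$ and $43$, is exactly why the constants $2$, $36$, $3200$ are stated with slack. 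One must also keep track of the ranges $k\ge1$, $k\ge2$, $k\ge4$, which are dictated by when $\Delta_{k-2}$ first has the regularity required to apply the relevant case of Proposition \ref{model}.
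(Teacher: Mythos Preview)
Your argument is correct and follows essentially the same route as the paper: iterate Proposition~\ref{model} at $\beta=1$ to get the $2^{-k/2}$ contraction, then read one application of \eqref{beta=1} (respectively \eqref{beta=2}) as a $\beta=2$ (respectively $\beta=4$) bound to climb levels, and handle part 3) by the numerical check that the right side of \eqref{beta=2} is $\preceq\delta\sum_{n\ge3}x^n/2^n$. The only organizational difference is that the paper runs the induction in pairs $(\Delta_{2k-1},\Delta_{2k})$ while you track individual indices and absorb the half-step with the factor $\sqrt2$; the constants $2,18/36,1600/3200$ and the index thresholds $k\ge1,2,4$ line up exactly.
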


\begin{proof}
1) follows from the definition of germ and \eqref{expansion-p-k}.

2)  Define $(Q_k)_{k\ge-1}$ and $(\Delta_k)_{k\ge-1}$
as in \eqref{iterscale} and \eqref{delta-k}. By \eqref{delta-k-x},  $\Delta_{k-1}, \Delta_k$ and $ \Delta_{k+1}$ satisfies the assumption of Proposition \ref{model} for $k\ge 0$.
 By the assumption of this lemma we have
$$
|\Delta_{-1}(x)|^*\preceq\delta\sum\limits_{n\ge3}x^n,\quad |\Delta_{0}(x)|^*\preceq \delta\sum\limits_{n\ge3}x^n.
$$
Then by \eqref{beta=1} and induction, for any $k\ge1$,
\begin{equation}\label{base1}
|\Delta_{2k-1}(x)|^*, |\Delta_{2k}(x)|^*\preceq
 {2}^{-k}\delta\sum\limits_{n\ge3}x^n.
\end{equation}
Consequently by the assumption, \eqref{base1}, \eqref{beta=1} and induction, for any $k\ge1$,
\begin{equation}\label{base2}
|\Delta_{2k-1}(x)|^*, |\Delta_{2k}(x)|^*\preceq 18\cdot {2}^{-k}\delta\sum\limits_{n\ge3}\frac{x^n}{2^n}.
\end{equation}
By \eqref{base2}, \eqref{beta=2} and induction,  for any $k\ge2$,
\begin{equation}\label{base3}
|\Delta_{2k-1}(x)|^*,|\Delta_{2k}(x)|^*\preceq 1600\cdot {2}^{-k}\delta\sum\limits_{n\ge3}\frac{x^n}{4^n}.
\end{equation}

\eqref{base1}  implies that $(P_{k-1},P_{k})$ is $(2\cdot {2}^{-k/2}\delta,1)$-regular at $x_0$ for any $k\ge1$. \eqref{base2}  implies that $(P_{k-1},P_{k})$ is $(36\cdot {2}^{-k/2}\delta,2)$-regular at $x_0$ for any $k\ge2$.  \eqref{base3}  implies that $(P_{k-1},P_{k})$ is $(3200\cdot {2}^{-k/2}\delta,4)$-regular at $x_0$ for any $k\ge4$.

3)
By the condition we have
$$
|\Delta_{-1}(x)|^*\preceq\delta\sum\limits_{n\ge3}\frac{x^n}{2^n},\quad
|\Delta_{0}(x)|^*\preceq \delta\sum\limits_{n\ge3}\frac{x^n}{2^n}.
$$
Then by  \eqref{beta=2} and induction, for any $k\ge1$,
$$
|\Delta_{k}(x)|^*\preceq \delta\sum\limits_{n\ge3}\frac{x^n}{2^n},
$$
which implies the result.
\end{proof}


 \section{Proof of Lemma \ref{iterband}}\label{inductive-lem}

 Recall that $\delta_0=10^{-2},  \delta_1=10^{-4}$ and $\delta_2=10^{-16}.$ We start with two simple geometric lemmas:

 \begin{lem}\label{zero}
 Assume $\varphi$ is a polynomial satisfying
$$
|\varphi(x)-2\cos x|\le \delta_0,\quad \forall x\in[0,1.9]\quad(\mbox{resp.} [-1.9,0]).
$$
We further assume $x_+$(resp. $x_-$)  is the minimal $x\in[0,\pi]$
(resp. the maximal $x\in[-\pi,0]$) such that $\varphi(x)=0$. Then
$$
|x_+-\pi/2|\le \delta_0,\quad (\mbox{resp.} |x_--(-\pi/2)|\le \delta_0).
$$
\end{lem}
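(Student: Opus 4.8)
The plan is to exploit that $2\cos x$ vanishes at $\pi/2$ with a definite (nonzero) slope, so that any polynomial $\varphi$ that is uniformly $\delta_0$-close to it on $[0,1.9]$ must change sign in a $\delta_0$-window around $\pi/2$ and cannot vanish before that window. I treat the case of the interval $[0,1.9]$ in detail; the statement for $[-1.9,0]$ follows at once by applying the result to $x\mapsto\varphi(-x)$, since $2\cos x$ is even.

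First I would record the elementary facts that are used: $\pi/2\approx1.5708$, so $\pi/2+\delta_0<1.9$ when $\delta_0=10^{-2}$, hence the hypothesis $|\varphi(x)-2\cos x|\le\delta_0$ is available on all of $[0,\pi/2+\delta_0]$; and $2\sin\delta_0>\delta_0$, which holds because $\sin t>t/2$ for $0<t\le1$. Now on $[0,\pi/2-\delta_0]$ monotonicity of $\cos$ gives $2\cos x\ge 2\cos(\pi/2-\delta_0)=2\sin\delta_0$, so $\varphi(x)\ge 2\sin\delta_0-\delta_0>0$ there. Thus $\varphi$ has \emph{no} zero in $[0,\pi/2-\delta_0]$.

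Next, at the point $\pi/2+\delta_0$ we have $2\cos(\pi/2+\delta_0)=-2\sin\delta_0<-\delta_0$, hence $\varphi(\pi/2+\delta_0)\le -2\sin\delta_0+\delta_0<0$. Since $\varphi(\pi/2-\delta_0)>0$, the intermediate value theorem produces a zero of $\varphi$ in the open interval $(\pi/2-\delta_0,\pi/2+\delta_0)$. In particular the minimal zero $x_+$ of $\varphi$ in $[0,\pi]$ exists; by the previous paragraph it must lie strictly to the right of $\pi/2-\delta_0$, and by the zero just found it lies at or to the left of $\pi/2+\delta_0$. Therefore $x_+\in(\pi/2-\delta_0,\pi/2+\delta_0)$, i.e.\ $|x_+-\pi/2|<\delta_0$, which is the claim.

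There is essentially no obstacle: the only quantitative inputs are the numerical inequality $2\sin\delta_0>\delta_0$ and the containment $[0,\pi/2+\delta_0]\subset[0,1.9]$, both immediate for $\delta_0=10^{-2}$. The one point to be careful about is the logical order of the two sign estimates — one must first exclude zeros of $\varphi$ on $[0,\pi/2-\delta_0)$ before invoking the sign change near $\pi/2$, so as to conclude that this produces the \emph{minimal} zero rather than merely some zero.
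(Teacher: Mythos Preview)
Your argument is correct. The paper's proof takes a slightly different tack: it assumes the zero $x_+$ exists in $[0,\pi]$, observes that $|2\cos x_+|=|\varphi(x_+)-2\cos x_+|\le\delta_0$, and then inverts via the Jordan-type inequality $|\sin y|\ge 2|y|/\pi$ for $|y|\le\pi/2$, writing $|2\cos x_+|=2|\sin(\pi/2-x_+)|\ge\frac{4}{\pi}|x_+-\pi/2|$ to conclude $|x_+-\pi/2|\le\frac{\pi}{4}\delta_0<\delta_0$. Your sign-change/IVT approach is equally elementary but more self-contained: it simultaneously \emph{produces} the zero and shows there is none to the left of $\pi/2-\delta_0$, so you never need to separately verify that $x_+\le 1.9$ before invoking the uniform-closeness hypothesis at $x_+$ (a small point the paper leaves implicit). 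The paper's route is a touch shorter and gives the marginally sharper bound $\frac{\pi}{4}\delta_0$, which is not needed downstream.
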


\begin{proof}
By the assumption we have
$$
|2\cos x_+|=|\varphi(x_+)-2\cos x_+| \le \delta_0.
$$
On the other hand since $|\sin x|\ge 2|x|/\pi$ for $|x|\le \pi/2$ and $|\frac{\pi}{2}-x_+|\le \frac{\pi}{2}$
$$
|2\cos x_+|=2|\sin(\frac{\pi}{2}-x_+)|\ge \frac{4}{\pi}|x_+-\frac{\pi}{2}|.
$$
This prove the lemma for $x_+$. The proof for $x_-$ is analogous.
\end{proof}

 \begin{prop}\label{ratio}
Assume  polynomial pair $(P_{-1},P_0)$  is $(\delta_1,2)$-regular at $x_0$. Then
there exist $y^-_{-1}<y^-_0<x_0<y^+_0<y^+_{-1}$  such that, for $k=-1,0$,
$$
P_k(y^-_k)=P_k(y^+_k)=0; \ \  P_k(x)>0 \ \ (x\in I_k^-\cup I_k^+),
$$
where $I^-_k=(y^-_k,x_0]$, $I^+_k=[x_0,y^+_k)$.  Moreover
$$
\frac{|I^+_0|}{|I^+_{-1}|}, \ \ \ \frac{|I^-_0|}{|I^-_{-1}|}> 2.1^{-1}.
$$
\end{prop}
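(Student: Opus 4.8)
The plan is to transfer the whole question to the rescaled polynomials
$Q_{-1}(x)=P_{-1}\!\big(\tfrac{2x}{\rho}+x_0\big)$ and $Q_0(x)=P_0\!\big(\tfrac{x}{\rho}+x_0\big)$, where $\rho$ is the scaling factor of the $(\delta_1,2)$-regular germ. The point is that under the increasing affine substitution $t=2^k\rho(x-x_0)$ (so $k=-1$ corresponds to $t=\tfrac{\rho}{2}(x-x_0)$ and $k=0$ to $t=\rho(x-x_0)$) the zeros of $P_{-1},P_0$ near $x_0$ become the zeros of $Q_{-1},Q_0$ near $0$, and the latter are uniformly close to $2\cos x$, so their first zeros sit near $\pm\pi/2$.

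First I would quantify this closeness. By $(\delta_1,2)$-regularity, for $k=-1,0$ one has $|\Delta_k(x)|\le\delta_1\sum_{n\ge3}|x|^n/2^n$ for $|x|<2$; summing this geometric-type series at $|x|=1.9$ gives $|\Delta_k(x)|\le\delta_1\cdot\tfrac{(0.95)^3}{1-0.95}<20\delta_1\le\delta_0$ on $[-1.9,1.9]$, where the last step is the first inequality of \eqref{coeff}. Hence $|Q_k(x)-2\cos x|\le\delta_0$ on $[-1.9,1.9]$; since $Q_k(0)=2$ and $Q_k(\pm1.9)<-0.6$, the intermediate value theorem gives a zero of $Q_k$ in $(0,1.9)\subset[0,\pi]$ and one in $(-1.9,0)\subset[-\pi,0]$. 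Let $s_k^+$ be the minimal positive and $s_k^-$ the maximal negative such zero. Lemma \ref{zero}, in both its versions, yields $|s_k^+-\pi/2|\le\delta_0$ and $|s_k^-+\pi/2|\le\delta_0$, and minimality of $s_k^+$, maximality of $s_k^-$, together with $Q_k(0)=2>0$, force $Q_k>0$ on $(s_k^-,s_k^+)$.

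Next I would read off the conclusion. Put $y_k^\pm:=x_0+\tfrac{s_k^\pm}{2^k\rho}$, so that $y_0^\pm=x_0+s_0^\pm/\rho$ and $y_{-1}^\pm=x_0+2s_{-1}^\pm/\rho$; then $P_k(y_k^\pm)=0$, and since $x\mapsto 2^k\rho(x-x_0)$ maps $I_k^-\cup I_k^+$ bijectively onto $(s_k^-,s_k^+)$, the positivity of $Q_k$ there gives $P_k>0$ on $I_k^\pm$. The ordering $y_{-1}^-<y_0^-<x_0<y_0^+<y_{-1}^+$ holds because $0<s_0^+\le\pi/2+\delta_0<\pi-2\delta_0\le 2s_{-1}^+$, and symmetrically on the left. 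Finally $|I_0^+|=s_0^+/\rho$ and $|I_{-1}^+|=2s_{-1}^+/\rho$, so
$$\frac{|I_0^+|}{|I_{-1}^+|}=\frac{s_0^+}{2s_{-1}^+}\ge\frac{\pi/2-\delta_0}{2(\pi/2+\delta_0)}>\frac{1}{2.1},$$
the last inequality being the numerical check $0.493\ldots>0.476\ldots$ with $\delta_0=10^{-2}$; the estimate for $|I_0^-|/|I_{-1}^-|$ is identical.

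The argument is essentially routine once Lemma \ref{zero} is available; the only delicate point — and the reason the constants in \eqref{coeff} are calibrated as they are — is that $\delta_0$ must be small enough simultaneously for Lemma \ref{zero} to apply on all of $[-1.9,1.9]$ (which forces $20\delta_1\le\delta_0$) and for the factor-$2$ ratio $\tfrac{\pi/2-\delta_0}{2(\pi/2+\delta_0)}$ to remain strictly above $2.1^{-1}$. Since $\pi/2>1.57$ and $\delta_0=10^{-2}$, both margins are comfortable, so no finer estimate is needed here.
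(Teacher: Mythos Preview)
Your proof is correct and follows essentially the same route as the paper: rescale via $Q_k(x)=P_k\big(\tfrac{x}{2^k\rho}+x_0\big)$, sum the geometric tail of $\Delta_k$ on $[-1.9,1.9]$ to get the $\delta_0$-closeness needed for Lemma~\ref{zero}, and then read off the ratio bound from $|s_k^\pm\mp\pi/2|\le\delta_0$. You are in fact more careful than the paper in explicitly verifying, via the intermediate value theorem, the existence of the zeros $s_k^\pm$, the positivity of $Q_k$ on $(s_k^-,s_k^+)$, and the ordering $y_{-1}^-<y_0^-<x_0<y_0^+<y_{-1}^+$; the paper leaves these points implicit.
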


\begin{figure}[h]
\includegraphics[width=0.4\textwidth]{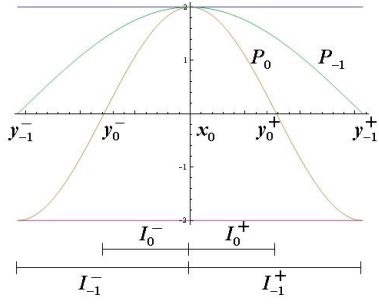}
\caption{Illustration of Proposition \ref{ratio}. }
\end{figure}

\proof\
Let $(P_{-1},P_0)$ be $(\delta_1,2)$-regular at $x_0$ with scaling  factor $a$.

For $k=-1,0$, define $Q_k(x)$ as in \eqref{iterscale}.
Then for $|x|<2$,
 $$
 |Q_{-1}(x)-2\cos x|, \ \ |Q_{0}(x)-2\cos x|\le \delta_1\sum_{n=3}^\infty \frac{|x|^n}{2^n}= \frac{|x|^3}{8-4|x|}\delta_1.
 $$
 Especially for $|x|\le 1.9$, we have
 $$
 |Q_{-1}(x)-2\cos x|, \ \ |Q_{0}(x)-2\cos x|\le \frac{1.9^3\delta_1}{8-4\cdot 1.9}<
 20\delta_1<\delta_0.$$

By Lemma \ref{zero},
$$|a(y_0^+-x_0)-\frac{\pi}{2}|<\delta_0, \quad |\frac{a}{2}(y_{-1}^+-x_0)-\frac{\pi}{2}|<\delta_0.$$
Then
 $$
 \frac{|I^+_{-1}|}{|I^+_0|}=\frac{|y_{-1}^+-x_0|}{|y_0^+-x_0|}\le
 \frac{2(\frac{\pi}{2}+0.01)}{\frac{\pi}{2}-0.01}<2.1.
 $$
The proof of the  other part of the proposition is analogous.
 \hfill $\Box$

 The proof of Lemma \ref{iterband} rely on the following technical  proposition.

\begin{prop}\label{branch2}
Let $(P_k)_{k\ge-1}$ satisfy recurrence  relation \eqref{iter}.
Assume  $(P_{-1},P_0)$ is
$(\delta_2,4)$-regular at $x_0$ with scaling  factor $a$.
Let  $y^+_0$( resp. $y^-_0$) be the minimal $t>x_0$ ( resp. maximal $t<x_0$)
such that $P_0(t)=0$.
Let  $x^+_3$( resp. $x^-_3$) be the maximal $t<y_0^+$ ( resp. minimal $t>y_0^-$)
such that $P_3(t)=0$.
Let $I_0^+=[x_0,y_0^+]$, $I_0^-=[y_0^-,x_0]$,  $I_3^+=[x_3^+,y_0^+]$ and $I_3^-=[y_0^-,x_3^-]$. Then
$$\frac{|I_3^+|}{|I_0^+|}\ge 2.1^{-3},\quad
\frac{|I_3^-|}{|I_0^-|}\ge 2.1^{-3}.$$

Moreover, $(P_3,P_4)$ is $(\delta_1,2)$-regular at both $y^{+}_0$ and $y^-_0$.
\end{prop}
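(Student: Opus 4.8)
The plan is to establish, in order: (i) a precise location of $y_0^\pm$ and a non-degenerate germ for $(P_3,P_4)$ there; (ii) the $(\delta_1,2)$-regularity of that germ; (iii) the two ratio inequalities.

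First I would exploit the $(\delta_2,4)$-regularity at $x_0$. Write $Q_k^{(0)}(x)=P_k(x/(2^ka)+x_0)$ and $\Delta_k^{(0)}(x)=Q_k^{(0)}(x)-2\cos x$ as in \eqref{iterscale}--\eqref{delta-k}; the hypothesis is $|\Delta_{-1}^{(0)}|^\ast,|\Delta_0^{(0)}|^\ast\preceq\delta_2\sum_{n\ge3}x^n/4^n$, so $Q_{-1}^{(0)}$ and $Q_0^{(0)}$ are within $O(\delta_2)$ of $2\cos$ on the disc of radius $4$; in particular this is $<\delta_0$ on $[0,1.9]$, so Lemma \ref{zero} applied to $Q_0^{(0)}$ gives $s_0:=a(y_0^+-x_0)$ with $|s_0-\tfrac\pi2|=O(\delta_2)$. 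From $P_1=P_{-1}^2(P_0-2)+2$ and $P_0(y_0^+)=0$ one gets $P_1(y_0^+)=2-2P_{-1}(y_0^+)^2$; since $P_{-1}(y_0^+)=Q_{-1}^{(0)}(s_0/2)=\sqrt2+O(\delta_2)$ (the argument $s_0/2\approx\pi/4$ lies inside the disc of radius $4$) and $P_0'(y_0^+)=aQ_0^{(0)\prime}(s_0)=-2a\bigl(1+O(\delta_2)\bigr)$, one has $P_1(y_0^+)=-2+O(\delta_2)$, which is $<2$ and nonzero, and $P_0'(y_0^+)\ne0$. Hence Lemma \ref{suff-condi} applies to $(f_0,f_1)=(P_0,P_1)$, and its proof yields the exact expansion $P_k(x)=2-(2^{k-3}\rho)^2(x-y_0^+)^2+O((x-y_0^+)^3)$ for $k\ge3$, where $\rho:=\sqrt{2-P_1(y_0^+)}\,|P_0'(y_0^+)P_1(y_0^+)|=\bigl(8+O(\delta_2)\bigr)a$; in particular $(P_3,P_4)$ has a $2\rho$-germ at $y_0^+$.

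The core --- and most delicate --- step is the $(\delta_1,2)$-regularity. Put $\widehat Q_k(z):=P_k\bigl(z/(2^{k-3}\rho)+y_0^+\bigr)$; then \eqref{iter} becomes $\widehat Q_k(z)=\widehat Q_{k-2}(z/4)^2(\widehat Q_{k-1}(z/2)-2)+2$ for $k\ge1$, and $\widehat Q_3(z)=\widehat Q_4(z)=2-z^2+O(z^3)$ exactly, while the desired $(\delta_1,2)$-regularity of $(P_3,P_4)$ at $y_0^+$ (with scaling factor $2\rho$) is precisely $|\widehat Q_3-2\cos z|^\ast,\ |\widehat Q_4-2\cos z|^\ast\preceq\delta_1\sum_{n\ge3}z^n/2^n$. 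The subtlety is that the rescaled initial pair at $y_0^+$ is close not to $2\cos z$ but to the phase-shifted cosines $2\cos(z+\tfrac\pi4)$ and $2\cos(z+\tfrac\pi2)$: indeed $\widehat Q_{-1}(z)=Q_{-1}^{(0)}(\tfrac{8a}{\rho}z+\tfrac{s_0}{2})$ and $\widehat Q_0(z)=Q_0^{(0)}(\tfrac{8a}{\rho}z+s_0)$, with $\tfrac{8a}{\rho}-1,\ s_0-\tfrac\pi2=O(\delta_2)$. Setting $\theta_{-1}=\tfrac\pi4$, $\theta_0=\tfrac\pi2$ and $\theta_k=2\theta_{k-1}$ for $k\ge1$ (so $\theta_1=\pi$ and $\theta_k\equiv0\pmod{2\pi}$ for $k\ge2$), one checks the identity $2\cos(z+\theta_k)=\bigl(2\cos(z/4+\theta_{k-2})\bigr)^2\bigl(2\cos(z/2+\theta_{k-1})-2\bigr)+2$ (valid because $\theta_{k-1}=2\theta_{k-2}$ and $\theta_k=2\theta_{k-1}$), so the defects $D_k:=\widehat Q_k-2\cos(z+\theta_k)$ obey the recurrence \eqref{delta-k-x} with the cosines shifted by the $\theta$'s, while $|D_{-1}|^\ast,|D_0|^\ast\preceq C_0\delta_2$ times a fixed convergent power series. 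One then propagates the coefficient estimates behind Lemma \ref{control-coefficient} 2) and Proposition \ref{model}, adapted to the shifts --- harmless for $k\ge2$, where they are multiples of $\pi$, and for $k=1$ amounting only to replacing $2+2\cos$ by $2-2\sin$ and the like, which costs bounded factors --- through the four steps $D_{-1},D_0\rightsquigarrow D_1\rightsquigarrow D_2\rightsquigarrow D_3,D_4$. This gives $|D_3|^\ast,|D_4|^\ast\preceq C_1\delta_2\sum_{n\ge0}z^n/2^n$; but $D_3=\widehat Q_3-2\cos z$ and $D_4=\widehat Q_4-2\cos z$ have vanishing $z^0,z^1,z^2$ coefficients (by the exact germ expansions together with $2\cos z=2-z^2+O(z^4)$), so in fact $|D_3|^\ast,|D_4|^\ast\preceq C_1\delta_2\sum_{n\ge3}z^n/2^n$; and the second relation of \eqref{coeff}, which encodes exactly this bookkeeping ($C_1\le400^4\cdot4\cdot3$), gives $C_1\delta_2\le\delta_1$. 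Thus $(P_3,P_4)$ is $(\delta_1,2)$-regular at $y_0^+$; the same computation with $\theta_{-1}=-\tfrac\pi4$, $\theta_0=-\tfrac\pi2$ gives it at $y_0^-$. I expect this bookkeeping --- tracking the phases and the $O(\delta_2)$ low-order defects of $\widehat Q_1,\widehat Q_2$ --- to be the main obstacle.

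Finally, the ratios. As $(P_3,P_4)$ is $(\delta_1,2)$-regular at $y_0^+$ with factor $2\rho$, the function $\widehat Q_3(z)=P_3(z/\rho+y_0^+)$ satisfies $|\widehat Q_3(z)-2\cos z|\le20\delta_1<\delta_0$ for $|z|\le1.9$ (using the first relation of \eqref{coeff}); the intermediate value theorem produces a zero of $\widehat Q_3$ in $[-\pi,0]$, and the $x_-$ case of Lemma \ref{zero} shows the largest such zero $z_-$ obeys $|z_-+\tfrac\pi2|\le\delta_0$; since $\widehat Q_3>0$ on $(z_-,0]$, it follows that $x_3^+=y_0^++z_-/\rho$ and $|I_3^+|=|z_-|/\rho\ge(\tfrac\pi2-\delta_0)/\rho$. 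On the other hand $|I_0^+|=y_0^+-x_0=s_0/a\le\bigl(\tfrac\pi2+O(\delta_2)\bigr)/a$ and, crudely, $\rho=\sqrt{2-P_1(y_0^+)}\,|P_0'(y_0^+)|\,|P_1(y_0^+)|\le\sqrt{4+O(\delta_2)}\cdot\bigl(2+O(\delta_2)\bigr)a\cdot\bigl(2+O(\delta_2)\bigr)=8a\bigl(1+O(\delta_2)\bigr)$, whence
$$
\frac{|I_3^+|}{|I_0^+|}\ \ge\ \frac{\tfrac\pi2-\delta_0}{8\bigl(1+O(\delta_2)\bigr)\bigl(\tfrac\pi2+O(\delta_2)\bigr)}\ >\ \frac18-O(\delta_2)\ >\ 2.1^{-3},
$$
the final inequality because $\tfrac18=0.125>2.1^{-3}\approx0.108$. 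The bound $|I_3^-|/|I_0^-|\ge2.1^{-3}$ follows the same way, using the $x_+$ case of Lemma \ref{zero} for $P_3$ near $y_0^-$.
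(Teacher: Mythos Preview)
Your outline has the right architecture --- locate $y_0^+$ via Lemma~\ref{zero}, compute $\rho$ via Lemma~\ref{suff-condi}, propagate defects through four steps of a phase-shifted analogue of Proposition~\ref{model}, then read off the ratio --- and this is indeed how the paper proceeds (the phase-shifted propagation is Proposition~\ref{model2}). But there is one ordering choice where you diverge from the paper, and it breaks the constants.

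You rescale by $\rho$ \emph{first} and compare to the ideal phases $\theta_k=2^{k+1}\pi/4$, so your initial defects $D_{-1},D_0$ already carry the corrections coming from $\lambda:=8a/\rho\ne1$ and $s_0\ne\pi/2$. Since $|\lambda-1|<100\delta_2$, the cosine piece $2\cos(\lambda z+s_0)-2\cos(z+\tfrac\pi2)$ has $z^1$-coefficient of size $\sim2|\lambda-1|\approx200\delta_2$ and $z^3$-coefficient $\sim|\lambda^3-1|/3\approx100\delta_2$; to get $|D_0|^\ast\preceq C_0\delta_2\sum_{n\ge0}z^n/2^n$ you therefore need $C_0\gtrsim800$, not $12$. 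After four applications of the $400$-per-step propagation this yields $C_1\delta_2\gtrsim400^4\cdot800\cdot\delta_2\approx2\times10^{-3}$, which exceeds $\delta_1=10^{-4}$ by a factor of about $20$. So the bookkeeping you attribute to the second inequality in \eqref{coeff} does not close in your ordering.

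The paper avoids this by iterating in the \emph{original} $a$-scaling: it sets $\bar\Delta_k(x):=\Delta_k(x+2^kt_0)$ (shift by the actual zero, no rescaling), so $|\bar\Delta_{-1}|^\ast,|\bar\Delta_0|^\ast\preceq4\delta_2\sum x^n/\beta^n$ with $\beta=4-\pi/2-0.01$ purely from \eqref{derivatives}; four applications of Proposition~\ref{model2} then give $|\bar\Delta_3|^\ast,|\bar\Delta_4|^\ast\preceq\tfrac{\delta_1}{3}\sum x^n/\beta^n$. Only \emph{after} this does the paper pass to the $\rho$-scaling, writing $\tilde\Delta_k(x)=\bar\Delta_k(\tfrac{8a}{\rho}x)+[2\cos(\tfrac{8a}{\rho}x+2^kt_0)-2\cos x]$ and bounding the bracket directly by $\tfrac{2\delta_1}{3}\sum x^n/2^n$ (this is where the factor $3$ in \eqref{coeff} enters). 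The point is that the $O(\delta_2)$ rescaling corrections are added at the \emph{end}, so they are not amplified by $400^4$.

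A minor slip: in your ratio estimate, ``$>\tfrac18-O(\delta_2)$'' should read ``$>\tfrac18-O(\delta_0)$'' (the $\delta_0$ comes from Lemma~\ref{zero} applied to $\widehat Q_3$); the final inequality $>2.1^{-3}$ still holds with room to spare. Also, your claim that the shifts are ``multiples of $\pi$'' for $k\ge2$ fails at $k=2$, where $\theta_0=\pi/2$ appears; this is harmless for the bound (Proposition~\ref{model2} is uniform in $\tau$) but worth noting.
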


\begin{figure}[h]
\includegraphics[width=0.4\textwidth]{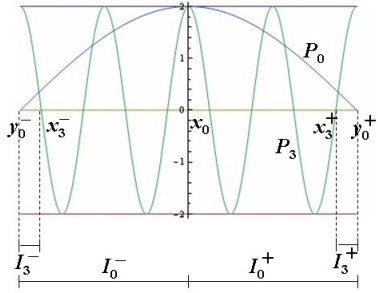}
\caption{Illustration of Proposition \ref{branch2}. }
\end{figure}

 \noindent {\bf Proof of Lemma \ref{iterband}}\
 We only consider the case in the interval $[x_0,y_0^+]$, aother case is  the same.

For $k=0,\cdots, K$, let $y_k$ be the minimal $t>x_0$ such that $P_k(t)=0$. Then $y_0=y_0^+$ and $y_K=y_K^+.$
For $k=3,\cdots, K$, let $x_k$ be the maximal $t<y_0^+$ such that $P_k(t)=0$. Then $x_K=x_K^+.$

$(P_{-1},P_0)$ is $(\delta_2,4)$-regular at $x_0$, then it is $(\delta_1,2)$-regular and $(1,1)$-regular at $x_0$ by the definition of regularity. By Lemma \ref{control-coefficient} 1), 2) and \eqref{coeff}, $(P_{K-1},P_K)$ is $(\delta_2,4)$-regular at $x_0$.

By  Lemma \ref{control-coefficient} 3), $(P_{k-1},P_k)$ is $(\delta_1,2)$-regular at $x_0$ for any $0\le k\le K$,
thus by Proposition \ref{ratio}, for any $0<k\le K$,
$$\frac{|y_k-x_0|}{|y_{k-1}-x_0|}\ge 2.1^{-1}.$$
And hence,
$$\frac{|y_K^+-x_0|}{|y_0^+-x_0|}=\frac{|y_K-x_0|}{|y_0-x_0|}\ge 2.1^{-K}.$$

By Proposition \ref{branch2}, $(P_3,P_4)$ is $(\delta_1,2)$-regular at $y_0^+=y_0$ and
\begin{equation}\label{three}
\frac{y_0-x_3}{y_0-x_0}\ge 2.1^{-3}.
\end{equation}
Same argument as above shows that  $(P_{K-1},P_K)$ is $(\delta_2,4)$-regular at $y_0$ and    $(P_{k-1},P_k)$ is $(\delta_1,2)$-regular at $y_0$ for any $3< k\le K.$
By Proposition \ref{ratio}, for any $3<k\le K$,
$$\frac{|y_0-x_k|}{|y_{0}-x_{k-1}|}\ge 2.1^{-1}.$$
Combining with \eqref{three} we conclude that
$$\frac{|y_0^+-x_K^+|}{|y_0^+-x_0|}=\frac{|y_0-x_K|}{|y_0-x_0|}\ge 2.1^{-K}.$$
This finish the  proof.
 \hfill $\Box$

 To prove Proposition \ref{branch2},  we  need the following  variant  of proposition \ref{model}.

 \begin{prop}\label{model2}
Assume $\Psi_0,\Psi_1,\Psi_2$ are real analytic functions with  Taylor expension $\Psi_k(x)=\sum_{n\ge 0} \Psi_{k,n}x^n, \ k=0,1,2 $
 and satisfy the following relation: for some $\tau\in \R$
\begin{eqnarray}\label{delta-k-x-1}
\Psi_2(x)&=&(2+2\cos \frac{x+\tau}{2})\cdot\Psi_{1}(\frac{x}{2})+\\
\nonumber&&\Psi_{0}(\frac{x}{4})\Big(4\cos \frac{x+\tau}{4}+\Psi_{0}
(\frac{x}{4})\Big)\Big(2\cos \frac{x+\tau}{2}-2+\Psi_{1}(\frac{x}{2})\Big).
\end{eqnarray}
If there exist $0<\delta\le1$ and $2\le\beta\le 3$ such that
$$|\Psi_{0,n}|,  |\Psi_{1,n}|\le \delta \beta^{-n},\quad \forall n\ge0,$$
then
\begin{equation*}
|\Psi_2(x)|^*\preceq
400\delta \sum\limits_{n\ge0}\frac{x^n}{\beta^n}.
\end{equation*}
\end{prop}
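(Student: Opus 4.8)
The plan is to treat \eqref{delta-k-x-1} exactly as the author treats \eqref{delta-k-x} in Proposition \ref{model}, but with two simplifications that make the estimate much cruder (and hence shorter): the series now start at $n\ge 0$ rather than $n\ge 3$, so there is no need to track leading-order cancellation, and the range $2\le\beta\le 3$ is bounded, so all $\beta$-dependent constants like $e^{\beta/2}$, $e^{\beta/4}$, $e^{\beta}$ can be replaced at the end by absolute numbers. First I would abbreviate, as before,
$$
\begin{cases}
(\mathrm{I}) &= (2+2\cos\tfrac{x+\tau}{2})\cdot\Psi_1(\tfrac{x}{2}),\\
(\mathrm{II}) &= 4\cos\tfrac{x+\tau}{4}+\Psi_0(\tfrac{x}{4}),\\
(\mathrm{III}) &= 2\cos\tfrac{x+\tau}{2}-2+\Psi_1(\tfrac{x}{2}),
\end{cases}
$$
so that $\Psi_2(x)=(\mathrm{I})+\Psi_0(\tfrac{x}{4})(\mathrm{II})(\mathrm{III})$, and estimate each factor's $|\cdot|^\ast$.

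The key inputs are: for any $\tau\in\R$ and any $c>0$, $|\cos\frac{x+\tau}{c}|^\ast\preceq |\cos\frac{x}{c}|^\ast + |\sin\frac{x}{c}|^\ast \preceq 2\sum_{n\ge 0}\frac{x^n}{c^n n!}\preceq 2 e^{x/c}$ (here I use the shift formula $\cos\frac{x+\tau}{c}=\cos\frac\tau c\cos\frac xc-\sin\frac\tau c\sin\frac xc$ and $|\cos\frac\tau c|,|\sin\frac\tau c|\le 1$); and the hypothesis gives $|\Psi_k(x/c)|^\ast\preceq \delta\sum_{n\ge 0}\frac{x^n}{(c\beta)^n}=\frac{\delta}{1-x/(c\beta)}$ formally, i.e. $\preceq \delta\sum_{n\ge0}(c\beta)^{-n}x^n$ coefficientwise. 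Multiplying these bounds coefficientwise and repeatedly using $|fg|^\ast\preceq|f|^\ast|g|^\ast$ together with the elementary domination $\sum_{n\ge0}\frac{x^n}{(c_1\beta)^n n!}\cdot\sum_{n\ge0}\frac{x^n}{(c_2\beta)^n}\preceq e^{1/c_1}\sum_{n\ge0}\frac{x^n}{(\min(c_1,c_2)\beta)^n}$ (valid since $\beta\ge1$ and $c_1\le 4$, so $\frac{1}{(c_1\beta)^k k!}\le \frac{1}{c_1^k k!}$ and the convolution telescopes as in \eqref{two-useful}), I collapse everything onto the single geometric series $\sum_{n\ge0}\beta^{-n}x^n$. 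Concretely: $|(\mathrm{I})|^\ast\preceq (4+4e^{x/2})\cdot\delta\sum\beta^{-n}(x/2)^n$, which is $\preceq C_1\delta\sum\beta^{-n}x^n$ with $C_1$ absolute; $|(\mathrm{II})|^\ast\preceq 8e^{x/4}+\delta\sum\beta^{-n}(x/4)^n\preceq C_2$ (an absolute constant series, using $\delta\le1$); $|(\mathrm{III})|^\ast\preceq 4e^{x/2}+\delta\sum\beta^{-n}(x/2)^n\preceq C_3$; and $|\Psi_0(x/4)|^\ast\preceq\delta\sum\beta^{-n}(x/4)^n\preceq\delta\sum\beta^{-n}x^n$. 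Hence $|\Psi_2(x)|^\ast\preceq (C_1+C_2 C_3)\,\delta\sum_{n\ge0}\beta^{-n}x^n$, and I would verify numerically that $C_1+C_2C_3\le 400$ for the worst case $\beta=2$ — which is why the stated constant is $400$ and not something smaller.

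The main obstacle — really the only delicate point — is making the coefficientwise convolution estimates genuinely rigorous rather than heuristic: one must be careful that when bounding a product like $\big(\sum a_n x^n\big)\big(\sum b_n x^n\big)$ with $a_n\le A c_1^{-n}/n!$ and $b_n\le B c_2^{-n}$, the resulting coefficient $\sum_{k}a_k b_{n-k}$ is $\le AB\,(\min(c_1,c_2))^{-n}\sum_{k\le n} \frac{(\min/c_1)^k}{k!}\le AB\,e^{1/c_1}(\min(c_1,c_2))^{-n}$, and then to push this down to $\beta^{-n}$ one uses $c_i\ge 2>1$ so $(c_i\beta)^{-n}\le \beta^{-n}$, losing nothing in $\beta$ but picking up the $e^{1/c_i}\le e^{1/2}$ constants — all of which are absorbed into the final absolute constant. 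Since the target $400$ is deliberately generous and $\beta\le 3$ keeps every exponential factor bounded, I expect the bookkeeping to close comfortably; the role of this proposition, as used in Proposition \ref{branch2}, is only to show that a single step of the recurrence near a \emph{new} zero $y_0^\pm$ (where the germ is shifted by the phase $\tau$) does not blow the regularity constant up by more than a fixed factor, so a clean absolute bound, not an optimal one, is all that is needed.
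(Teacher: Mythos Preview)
Your decomposition into $(\mathrm{I})+\Psi_0(\tfrac{x}{4})(\mathrm{II})(\mathrm{III})$ and the plan of bounding each factor's $|\cdot|^\ast$ and convolving is exactly the paper's approach; the paper also groups $\Psi_0(\tfrac{x}{4})\cdot(\mathrm{II})$ first and then multiplies by $(\mathrm{III})$, arriving at $|\Psi_2|^\ast\preceq\big(4e^{\beta/2}+(4e^{\beta/2}+\delta)^2\big)\delta\sum_{n\ge0}\beta^{-n}x^n$, which for $\beta\le3$, $\delta\le1$ is just below $400$.

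There is, however, a real slip in your convolution bookkeeping. The cosine factor has coefficients $\le c_1^{-n}/n!$ with \emph{no} $\beta$, while the $\Psi$-factor has coefficients $\le\delta(c_2\beta)^{-n}$; so your displayed ``elementary domination'' (with $(c_1\beta)^{-n}/n!$ in the first slot) does not match what you actually have. The correct convolution is
\[
\sum_{k=0}^n\frac{1}{c_1^k k!}\cdot\frac{1}{(c_2\beta)^{n-k}}
=\frac{1}{(c_2\beta)^n}\sum_{k=0}^n\frac{(c_2\beta/c_1)^k}{k!}
\le\frac{e^{c_2\beta/c_1}}{(c_2\beta)^n},
\]
so the constant picked up is $e^{\beta}$ (or $e^{\beta/2}$ with the paper's sharper trick of redistributing the extra $2^{-n}$), \emph{not} $e^{1/c_1}$. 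This is precisely where the hypothesis $\beta\le3$ enters. Your alternative of pulling out $\min(c_1,c_2\beta)^{-n}=c_1^{-n}=2^{-n}$ would give only $2^{-n}$ decay, which cannot be ``pushed down to $\beta^{-n}$'' when $\beta>2$; the remark ``$(c_i\beta)^{-n}\le\beta^{-n}$'' is true but irrelevant here, since the cosine factor carries no $\beta$.

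Two smaller points that matter for landing under $400$: your factor of $2$ on the cosine bound is unnecessary, since the even and odd coefficients of $\cos$ and $\sin$ interleave to give $|\cos\tfrac{x}{c}|^\ast+|\sin\tfrac{x}{c}|^\ast=\sum_{n\ge0}\tfrac{x^n}{c^n n!}$ exactly (the paper gets this directly from $|\cos^{(n)}x_0|\le1$); and because the paper's final constant $4e^{3/2}+(4e^{3/2}+1)^2\approx376$ is close to $400$, the cruder constants from your factor of $2$ and from $e^\beta$ in place of $e^{\beta/2}$ would overshoot. With those two refinements your argument becomes the paper's.
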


\begin{proof}
Write
$$\begin{array}{l}
(I)=\Big(2+2\cos (\frac{x}{2}+\frac{\tau}{2})\Big)\cdot \Psi_1(\frac{x}{2}),\\
(II)=4\cos (\frac{x}{4}+\frac{\tau}{4})+\Psi_0(\frac{x}{4}),\\
(III)=2\cos (\frac{x}{2}+\frac{\tau}{2})-2+ \Psi_1(\frac{x}{2}).
\end{array}
$$
Then
\begin{eqnarray}\label{Psi-2}
\Psi_2(x)=(I)+\Psi_0(\frac{x}{4})(II)(III).
\end{eqnarray}
Note that
\begin{equation}\label{cosine}
|\cos(x+x_0)|^*=|\sum_{n\ge0}\frac{\cos^{(n)}x_0}{n!}x^n|^\ast\preceq \sum_{n\ge0}\frac{x^n}{n!}.
\end{equation}
We have
$$
\begin{array}{rcl}
|(I)|^*&\preceq&\left(\sum\limits_{n\ge0}4\frac{x^n}{n!2^n}\right)\left(
\sum\limits_{n\ge0}\delta\frac{x^n}{2^n\beta^n}\right)
=
4\delta\sum\limits_{n\ge0}\frac{x^n}{(2\beta)^n}\sum_{k=0}\limits^{n}\frac{\beta^k}{k!}\\
&\preceq&
4\delta\sum\limits_{n\ge0}\frac{x^n}{\beta^n}\sum_{k=0}\limits^{n}\frac{(\frac{\beta}{2})^k}{k!}
\preceq 4 e^{\frac{\beta}{2}}\delta\sum\limits_{n\ge0}\frac{x^n}{\beta^n}
\end{array}$$
By \eqref{cosine} and the assumption we also have
$$
\begin{array}{rcl}
|\Psi_0(\frac{x}{4})\times (II)|^*&\preceq&\left(\delta\sum_{n\ge 0}\frac{x^n}{(4\beta)^n}\right)
\left(4\sum_{n\ge 0}\frac{x^n}{4^n n!}+\delta\sum_{n\ge 0}\frac{x^n}{(4\beta)^n}\right)\\
&\preceq&4e^{\frac{\beta}{2}}\delta\sum_{n\ge 0}\frac{x^n}{(2\beta)^n}+\delta^2\sum_{n\ge 0}\frac{x^n}{(4\beta)^n}(n+1)\\
&\preceq&4e^{\frac{\beta}{2}}\delta\sum_{n\ge 0}\frac{x^n}{(2\beta)^n}+\delta^2\sum_{n\ge 0}\frac{x^n}{(2\beta)^n}\\
&\preceq&\left(4e^{\frac{\beta}{2}}+\delta\right)\delta\sum_{n\ge 0}\frac{x^n}{(2\beta)^n}.
\end{array}
$$
Then we have
$$\begin{array}{rl}
&|\Psi_0(\frac{x}{4})\times (II)\times (III)|^*\\
\preceq&(4e^{\frac{\beta}{2}}+\delta)\delta\left(\sum\limits_{n\ge0}\frac{x^n}{(2\beta)^n}\right)
\left(4\sum\limits_{n\ge0}\frac{x^n}{2^n n!}+\delta\sum\limits_{n\ge0}\frac{x^n}{(2\beta)^n}\right)\\
=&(4e^{\frac{\beta}{2}}+\delta)\delta\left(4\sum\limits_{n\ge0}\frac{x^n}{\beta^n}\sum\limits_{k=0}^{n}\frac{(\frac{\beta}{2})^k}{k!}
+\delta\sum\limits_{n\ge0}\frac{x^n}{(2\beta)^n}(n+1) \right)\\
\preceq&(4e^{\frac{\beta}{2}}+\delta)^2\delta\sum\limits_{n\ge0}\frac{x^n}{\beta^n}.
\end{array}
$$
Since  $0<\delta\le1$ and $2\le\beta\le3$, by \eqref{Psi-2} we have
$$
|\Psi_2(x)|^*\preceq \left(4e^{\frac{\beta}{2}}+(4e^{\frac{\beta}{2}}+\delta)^2\right)\delta\sum_{n\ge0}\frac{x^n}{\beta^n}
\preceq 400\delta\sum_{n\ge0}\frac{x^n}{\beta^n}.
$$
This proves the proposition.
\end{proof}

\noindent {\bf Proof of Proposition \ref{branch2}.}\
  Recall that we have defined
  \begin{equation}\label{q-k}
Q_k(x)=P_k(\frac{x}{2^ka}+x_0)=:2\cos x+\Delta_k(x),\quad k=-1,0.
\end{equation}
Since $(P_{-1},P_0)$ is $(\delta_2,4)$-regular, we have
$$
|\Delta_{-1}(x)|^\ast \preceq \delta_2\sum_{n=3}^\infty \frac{x^n}{4^n},\quad
|\Delta_0(x)|^\ast \preceq \delta_2\sum_{n=3}^\infty \frac{x^n}{4^n}.
$$
And consequently, for $x\in(0,4) $ it is ready to show that for $k\ge 0$
 \begin{equation}\label{derivatives}
 \begin{array}{rcl}
 \begin{cases}
 |\Delta_0^{(k)}(x)|\\
  |\Delta_{-1}^{(k)}(x)|
 \end{cases}
 &\le& \delta_2 \left(\sum\limits_{n=3}^\infty\frac{x^n}{4^n}\right)^{(k)}=
 \frac{\delta_2}{16}\left(\frac{x^3}{4-x}\right)^{(k)}\\
 &=&\begin{cases}
 \frac{\delta_2}{16}\frac{x^3}{4-x}& k=0\\
 -\frac{\delta_2(x+2)}{8}+\frac{4\delta_2}{(4-x)^2}& k=1\\
-\frac{\delta_2}{8}+\frac{8\delta_2}{(4-x)^3} & k=2\\
 \frac{4 k!\delta_2}{(4-x)^{k+1}}& k\ge 3.
 \end{cases}
 \end{array}
 \end{equation}

$(P_{-1}, P_0)$ is $(\delta_2,4)$-regular implies that it is also $(\delta_2,2)$-regular.
Let $t_0$ be the minimal $t\in (0,2)$ such that $Q_0(t)=0$.  Then as proof in Proposition \ref{ratio} and Lemma \ref{zero},
\begin{equation}\label{est-t0}
P_0(t_0/a+x_0)=0,\quad |t_0-\frac{\pi}{2}|\le 20\delta_2.
\end{equation}
Consequently  $y^+_0=t_0/a+x_0$.  We have
$$
P_{-1}(y_0^+)=Q_{-1}(\frac{t_0}{2})=2\cos \frac{t_0}{2}+\Delta_{-1}(\frac{t_0}{2}).
$$
By \eqref{derivatives} and \eqref{est-t0} we get $|P_{-1}(y_0^+)-\sqrt{2}|\le 20\delta_2.$ Since
$$
P_{1}(y_0^+)=P_{-1}^2(y_0^+)(P_0(y_0^+)-2)+2,
$$
we conclude that
\begin{equation}\label{P1}
|P_1(y_0^+)+2|\le 120\delta_2.
\end{equation}
By \eqref{q-k}, $P_0^\prime(y_0^+)=aQ_0^\prime(t_0)=a(-2\sin t_0+\Delta_0^\prime(t_0))$. By \eqref{derivatives} and \eqref{est-t0},
\begin{equation}\label{P0}
\left|\frac{P_0^\prime(y_0^+)}{a}+2\right|\le 2\delta_2.
\end{equation}
By \eqref{fkx} and \eqref{P1}, \eqref{P0}, for $k=3,4$ we have
$$
P_k(x)=2-(2^{k-3}\rho)^2(x-y_0^+)^2 +O((x-y_0^+)^3)
$$
with $\rho=\sqrt{2-P_1(y_0^+)}|P_0^\prime (y_0^+)P_1(y_0^+)|$.
Thus
$$
\frac{\rho}{a}=\sqrt{4+\varepsilon_1}(2+\varepsilon_2)(2+\varepsilon_1)
$$
with $|\varepsilon_1|<120\delta_2$ and  $|\varepsilon_2|<2\delta_2$. Consequently we have
$$\left|\frac{\rho}{8a}-1\right|<100\delta_2,\quad \left|\frac{8a}{\rho}-1\right|<100\delta_2.$$
For $k\ge 3$, if we define $\tilde Q_k(x):= P_k(\frac{x}{2^{k-3}\rho}+y_0^+),$ then
\begin{equation}\label{tildeqk}
\tilde Q_k(x)=2-x^2+O(x^3)=2\cos x+O(x^3)=: 2\cos x +\tilde \Delta_k(x).
\end{equation}
We need to show that $(P_3,P_4)$ is $(\delta_1,2)$-regular,  i.e.,
$$
|\tilde\Delta_3(x)|^\ast, |\tilde\Delta_4(x)|^\ast \preceq \delta_1\sum_{n\ge 3} \frac{x^n}{2^n}.
$$

To get this result, we study $\bar\Delta_k(x):= \Delta_k(x+2^k t_0))$ first. We have
\begin{equation}\label{pk-1}
\begin{array}{rcl}
 P_k(\frac{x}{a}+y_0^+)&=&P_k(\frac{x+t_0}{a}+x_0)\ =\ Q_k(2^k(x+t_0))\\
&=&2\cos(2^k(x+t_0))+\Delta_k(2^k (x+t_0)\\
&=& 2\cos(2^k(x+t_0))+\bar\Delta_k(2^k x).
 \end{array}
\end{equation}
By the recurrence relation of $P_k$, it is ready to check  that
$$
\Psi_0: =\bar \Delta_{k-2}, \Psi_1: =\bar \Delta_{k-1}, \Psi_2: =\bar \Delta_{k}
$$ satisfies \eqref{delta-k-x-1} with $\tau=2^kt_0$ for $k\ge 1$.

On the other hand  we have
 $$
 \begin{cases}
 \bar \Delta_0(x)&=\Delta_0(x+t_0)=\sum_{n=0}^\infty \frac{\Delta_0^{(n)}(t_0)}{n!}x^n\\
 \bar \Delta_{-1}(x)&=\Delta_{-1}(x+t_0/2)=\sum_{n=0}^\infty \frac{\Delta_{-1}^{(n)}(t_0/2)}{n!}x^n.
 \end{cases}
$$
Write $\beta=4-\pi/2-0.01=2.419\cdots$ By \eqref{derivatives} and \eqref{est-t0} we get
$$
 |\bar\Delta_0(x)|^\ast,  |\bar \Delta_{-1}(x)|^\ast\preceq 4\delta_2\sum_{n=0}^\infty \frac{x^n}{\beta^n}.
$$
Recall that by \eqref{coeff} we have   $4\cdot 400^4\delta_2\le\delta_1/3$. By Proposition \ref{model2} and induction,  for $k=3,4$ we get
\begin{equation}\label{bar-Delta-k}
|\bar\Delta_k(x)|^\ast\preceq 4\cdot 400^k \delta_2\sum_{n=0}^\infty \frac{x^n}{\beta^n}
\preceq \frac{\delta_1}{3}\sum_{n=0}^\infty \frac{x^n}{\beta^n}.
\end{equation}

By \eqref{tildeqk} and \eqref{pk-1},  for $k=3,4$ we have
$$\begin{array}{rcl}
\tilde\Delta_k(x)&=&\tilde Q_k(x)-2\cos x\\
&=&P_k(\frac{x}{2^{k-3}\rho}+y_0^+)-2\cos x\\
&=&\bar\Delta_k(\frac{8a}{\rho}x)+2\cos(\frac{8a}{\rho}x+2^k t_0)-2\cos x\\
&=&\bar\Delta_k(\frac{8a}{\rho}x)-2\sin2^k t_0\sin(\frac{8a}{\rho}x)+2\cos2^kt_0\cos(\frac{8a}{\rho}x)-2\cos x.
\end{array}
$$
Notice that  $|\frac{8a}{\rho}-1|<100\delta_2=10^{-14}$, then by \eqref{bar-Delta-k}, for $k=3,4$ we have
\begin{equation}\label{EST-1}
|\bar\Delta_k(\frac{8a}{\rho}x)|^*\preceq \frac{\delta_1}{3}\sum_{n=0}^\infty \frac{x^n}{2^n}.
\end{equation}
Moreover,
$|t_0-\frac{\pi}{2}|<20\delta_2$ implies for $k=3,4$,
$$|\sin 2^kt_0|<320\delta_2,\quad |1-\cos 2^kt_0|<320\delta_2. $$
Consequently
\begin{equation}\label{EST-2}
|2\sin2^k t_0\sin(\frac{8a}{\rho}x)|^*\preceq640\delta_2\sum\limits_{n\ge1}\frac{(\frac{8a}{\rho}x)^{2n-1}}{(2n-1)!}
\preceq \frac{\delta_1}{3} \sum_{n=0}^\infty \frac{x^n}{2^n}.
\end{equation}
Finally we have
\begin{equation}\label{EST-3}
|2\cos2^kt_0\cos(\frac{8a}{\rho}x)-2\cos x|^*\preceq
2 \sum\limits_{n\ge0}\frac{|\cos2^kt_0(\frac{8a}{\rho})^{2n}-1| x^{2n}}{(2n)!}\\
\preceq \frac{\delta_1}{3} \sum_{n=0}^\infty \frac{x^n}{2^n},
\end{equation}
where we have used the fact that for $n\le10$,
$$|\cos2^kt_0(\frac{8a}{\rho})^{2n}-1|\le |\cos2^kt_0-1|+|(\frac{8a}{\rho})^{2n}-1|<\delta_1/16;$$
for $n>10$,
$$\frac{2|\cos2^kt_0(\frac{8a}{\rho})^{2n}-1|}{(2n)!}<\frac{2+2(1+100\delta_2)^{2n}}{(2n)!}<
\frac{\delta_1}{3\times 2^{2n}}.$$
Combine \eqref{EST-1}, \eqref{EST-2} and \eqref{EST-3}, we conclude that   for $k=3,4$
$$|\tilde\Delta_k(x)|^* = |\tilde Q_k(x)-2\cos x|\preceq \delta_1\sum_{n\ge3}\frac{x^n}{2^n}.$$
This proves that $(P_3,P_4)$ is $(\delta_1,2)$-regular at $y_0^+$ with scaling factor $2\rho$.

Since $(P_3,P_4)$ is $(\delta_1,2)$-regular at $y_0^+$, analogous to the proof of Proposition \ref{ratio}, we have
$$|\rho(y_0^+-x_3^+)-\frac{\pi}{2}|<\delta_0.$$
This implies $|\rho|I_3^+|-\frac{\pi}{2}|<0.01$.

On the other hand since  $(P_{-1},P_0)$ is $(\delta_1,2)$-regular at $x_0$ with scaling factor $a$, we have
$$|a(y_0^+-x_0)-\frac{\pi}{2}|<\delta_0.$$
This implies $|a|I_0^+|-\frac{\pi}{2}|<\delta_0$. Hence
$$\frac{|I_3^+|}{|I_0^+|}\ge \frac{1}{8}\frac{8a}{\rho}\frac{\frac{\pi}{2}-0.01}{\frac{\pi}{2}+0.01}\ge2.1^{-3}.$$

This prove the result for $y_0^+$. The proof for $y_0^-$ is the same.
\hfill $\Box$

\medskip

\noindent
{\bf Acknowledgements}.  The authors thank Professor  Jean Bellissard for many valuable  suggestions. Liu and Qu are supported by the National Natural Science Foundation of China, No. 11371055.  Qu is supported by the National Natural Science Foundation of China, No. 11201256.


\end{document}